\begin{document}
    \title{A saturated 1-system of curves on the surface of genus 3}
    \author{Zhaoshen Zhai}
    \address{Department of Mathematics and Statistics, McGill University, 805 Sherbrooke Street West, Montreal, QC, H3A 0B9, Canada}
    \email{zhaoshen.zhai@mail.mcgill.ca}
    \thanks{This work was partially supported by the Rubin Gruber Science Undergraduate Research Award grant for summer 2023.}
    \date{\today}
    \subjclass[2020]{Primary 57K20}
    \keywords{$1$-system, curves, genus $3$, Dehn twist, subsurface projection, intersection number, bigon criterion}

    \begin{abstract}
        We construct a system of $33$ essential simple closed curves that are pairwise non-homotopic and intersect at most once on the oriented, closed surface of genus $3$. Moreover, we show that our construction is \textit{saturated}, in the sense that it is not properly contained in any other such system of curves.
    \end{abstract}

    \maketitle

    Let $\Sigma_g$ be the oriented, closed surface of genus $g\geq1$, and let $k\geq0$. Following Juvan, Malni\v{c}, and Mohar \cite{JMM96}, we define a \textit{$k$-system of curves} on $\Sigma_g$ to be a collection of essential simple closed curves that are pairwise non-homotopic and intersect at most $k$ times; see Section \ref{sec:preliminaries} for preliminaries. Let $N(k,g)$ denote the maximum cardinality of a $k$-system on $\Sigma_g$, which was shown to be finite in the same paper using techniques from topological graph theory. They considered the following question.

    \begin{question*}
        How large can a $k$-system on $\Sigma_g$ be? In other words, what is $N(k,g)$?
    \end{question*}

    In particular, Farb and Leininger asked for the asymptotics of this question for $k=1$, and in this case, Przytycki \cite{Prz15} established a cubical bound $N(1,g)=O(g^3)$. This bound was later improved by Aougab, Biringer, and Gaster \cite{ABG17} to $O(g^3/(\log g)^2)$, and finally by Greene \cite{Gre19}, who established the current best known upper bound of $O(g^2\log g)$.

    Other than the torus $\Sigma_1$, which is easily seen to have $N(1,1)=3$, Malestein, Rivin, and Theran \cite{MRT14} explicitly determined $N(1,2)=12$ using the hyperelliptic involution on $\Sigma_2$. For $g=3$, \cite{MRT14} also proved that $18=6g\leq N(1,3)\leq(g-1)(2^{2g}-1)=126$, but this is far from optimal.\\

    In this paper, we improve the above result on $N(1,3)$ in two directions. First, we show that $N(1,3)\geq33$ via an explicit construction of a $1$-system $X$ with $33$ curves. Second, we show that $X$ is \textit{saturated}, in that it is not properly contained in any other $1$-system on $\Sigma_3$. However, we do not know whether $N(1,3)=33$, but we do suggest a possible route towards a positive result in Section \ref{sec:maximality}.

    \subsection*{Acknowledgements}

    I thank my advisor, Piotr Przytycki, for his consistent support throughout the summer of 2023, and for teaching me (the very basics of) geometric topology. This $1$-system of $33$ curves was first found by Chris Leininger and Piotr Przytycki in 2011 (personal communication). I also thank the referee for all the remarks and corrections that have greatly improved the article; in particular, I thank the referee for the short and elegant proof of the saturation claim in Proposition \ref{prp:type_triangle_24}.

    \section{Preliminaries}\label{sec:preliminaries}

    We give a brief survey of the relevant definitions and results.

    \subsection{Curves and surfaces}

    For the purposes of this paper, a \textit{surface} is a connected, oriented, $2$-dimensional manifold of finite type, possibly with boundary, but without punctures. These surfaces are all compact, and up to homeomorphism, there is a unique surface $\Sigma_{g,b}$ with genus $g$ and with $b$ boundary components. It is a fact that every surface $\Sigma_{g,b}$ with negative Euler characteristic admits a hyperbolic metric with geodesic boundary, so we call $\Sigma_{g,b}$ \textit{hyperbolizable}. Throughout this section, let $\Sigma$ be a hyperbolizable surface and let $\del\Sigma$ denote its (possibly empty) boundary.

    A \textit{(closed) curve} on $\Sigma$ is a continuous map $S^1\to\Sigma$, which is \textit{simple} if it embeds in $\Sigma$ $-$ in which case we identify it with its image $-$ and is \textit{essential} if it is not homotopic to a point. Unless otherwise stated, all curves are assumed to be simple and essential.

    \subsection{Dehn twists}

    Following \cite{Wil21}, consider an annular neighborhood $\overline{N}_\alpha\iso S^1\times\l[0,1\r]\subset\Sigma$ of a curve $\alpha$ on $\Sigma$, with $\alpha$ being the `core curve' $S^1\times\l\{1/2\r\}$. The \textit{cut surface} of $\Sigma$ along $\alpha$ is the surface $\Sigma_\alpha\coloneqq\Sigma\comp N_\alpha$ obtained by removing the interior of the annulus, which induces two boundary curves $\alpha^+$ and $\alpha^-$ on $\del\Sigma_\alpha$. The surface $\Sigma$ is then reconstructed by glueing the curves $\alpha^+$ and $\alpha^-$ by an annulus $A\coloneqq S^1\times\l[0,1\r]$. The curve $\alpha$ is said to be \textit{non-separating} if $\Sigma_\alpha$ is connected, and is \textit{separating} otherwise.

    Fix an orientation-preserving homeomorphism $\phi:A\to\overline{N}_\alpha$, where the orientation on $A$ is induced by the standard orientation of the plane. Following \cite{FM12}, we consider the \textit{twist} map $T:A\to A$ given by $T(\theta,t)\coloneqq\tpl{\theta+2\pi t,t}$. This is an orientation-preserving homeomorphism fixing $\del A$ pointwise, and lifts to a map $T_\alpha:\Sigma\to\Sigma$ given by $T_\alpha(x)\coloneqq(\phi\circ T\circ\phi^{-1})(x)$ for every $x\in N_\alpha$, which fixes every point outside of $N_\alpha$. The mapping class of $T_\alpha$ depends only on the homotopy class $a$ of $\alpha$, and is called the \textit{Dehn twist in $a$}. Our choice of the twist map $T$ gives us $T_\alpha$ as the \textit{left} Dehn twist; the \textit{right} Dehn twist is its inverse.

    \subsection{Arcs}

    An \textit{arc} on $\Sigma$ is a continuous map $\l[0,1\r]\to\Sigma$. We will mostly be interested in arcs whose endpoints lie on boundary components, in which case we say that the arc is \textit{between} these boundary components. An arc is \textit{simple} if it embeds in $\Sigma$, in which case we identify it with its image. Two arcs $\alpha$ and $\beta$ on $\Sigma$ that start at the same boundary component and end at the same boundary component are said to be \textit{homotopic} if there is a free homotopy $H:\l[0,1\r]\times\l[0,1\r]\to\Sigma$ between $\alpha$ and $\beta$ that slides the endpoints along the respective boundary components, in which case we write $\alpha\htopeq\beta$.

    \subsection{The Subsurface Projection}

    Let $\mc{C}_0(\Sigma)$ denote the collection of all (homotopy classes of) curves on $\Sigma$; this is the $0$-skeleton of the \textit{complex of curves} $\mc{C}(\Sigma)$. For a curve $\alpha$ on $\Sigma$, let $\Sigma'$ be a connected component of the cut surface of $\Sigma$ along $\alpha$ and let $\mc{A}_0(\Sigma')$ be the collection of all (homotopy classes of) simple arcs on $\Sigma'$. Suppose that $\Sigma=\Sigma_{g,b}$ is such that $3g+b>3$. Following \cite{MM00}*{Section 2.3}, we consider the \textit{subsurface projection} $\pi:\mc{C}_0(\Sigma)\to\ms{P}(\mc{A}_0(\Sigma'))$ to the power set of $\mc{A}_0(\Sigma')$, defined by sending a curve $\gamma\in\mc{C}_0(\Sigma)$ to the union of (homotopy classes of) simple arcs obtained by intersecting $\gamma$ with $\Sigma'$. Note that $\pi(\gamma)=\varnothing$ if $\gamma$ can be homotoped away from $\Sigma'$. In the situation concerned in this paper $-$ where $\pi(\gamma)$ is always a singleton $-$ we abuse notation and identify $\pi(\gamma)$ with its image.

    \subsection{Intersection numbers}

    Two curves $\alpha$ and $\beta$ on $\Sigma$ are said to be in \textit{minimal position} if $\l|\alpha\cap\beta\r|$ cannot be decreased by a homotopy. For the respective (free) homotopy classes $a$ and $b$ of $\alpha$ and $\beta$, their \textit{geometric intersection number} is defined to be $i(a,b)\coloneqq\min\l\{\l|\alpha\cap\beta\r|\st\alpha\in a\textrm{ and }\beta\in b\r\}$. In other words, $i(a,b)$ is the intersection number of representatives of $a$ and $b$ in minimal position. We note that $i(a,a)=0$, since any $\alpha,\alpha'\in a$ can be homotoped to be disjoint. If $\alpha$ is separating, then $i(a,b)$ is even for any $b$. Abusing notation, we write $i(\alpha,\beta)$ for $i(a,b)$, where $\alpha\in a$ and $\beta\in b$.

    The geometric intersection number can similarly be defined for (homotopy classes of) arcs. Recall that we allow homotopies of arcs between boundary components to slide the endpoints along the respective boundary components.

    \subsection{The Bigon Criterion}

    Following \cite{Wil21}, two curves $\alpha$ and $\beta$ on $\Sigma$ are said to bound a \textit{bigon} if there is an embedded disk in $\Sigma$ whose interior is disjoint from $\alpha\cup\beta$ and whose boundary is the union of two arcs from $\alpha$ and $\beta$ intersecting at two points. The well-known \textit{Bigon Criterion} \cite{FM12}*{Proposition 1.7} states that two curves $\alpha$ and $\beta$ are in minimal position if and only if they do not bound a bigon.

    It is a fact from Riemannian geometry that every free homotopy class of curves on a compact Riemannian manifold admits a geodesic representative. For hyperbolic surfaces, this geodesic representative is unique, and geodesic representatives of simple curves are again simple \cite{FM12}*{Propositions 1.3, 1.6}. Thus, by the Bigon Criterion, any two distinct simple geodesics on a hyperbolic surface are in minimal position.

    There is a Bigon Criterion for arcs too \cite{FM12}*{Section 1.2.7}, but now we also need to consider \textit{half-bigons}: two arcs $\alpha$ and $\beta$ on $\Sigma$ starting at the same boundary component $\gamma$ are said to bound a \textit{half-bigon} if there is an embedded triangle in $\Sigma$ whose boundary is the union of two arcs from $\alpha$ and $\beta$ intersecting at a vertex, and whose opposite side is an arc of $\gamma$. The Bigon Criterion then states that two arcs are in minimal position if and only if they do not bound any bigons and half-bigons.

    \subsection{Systems of curves and arcs}

    For a fixed $k\geq0$, a \textit{$k$-system} on $\Sigma$ is a collection $X\coloneqq\l\{\alpha_j\r\}$ of pairwise non-homotopic essential simple curves such that $i(\alpha_j,\alpha_l)\leq k$ for all $j\neq l$. We say that $X$ is \textit{maximal} if $\l|Y\r|\leq\l|X\r|$ for any $k$-system $Y$, and is \textit{saturated} if $X\cup\l\{\gamma\r\}$ is not a $k$-system for any curve $\gamma\not\in X$. If $\Sigma=\Sigma_{g,b}$, we let $N(k,g,b)$ be the cardinality of a maximal $k$-system on $\Sigma$; if $b=0$, write $N(k,g)\coloneqq N(k,g,0)$.

    We similarly define a \textit{$k$-system of arcs} as a collection of simple arcs that are pairwise non-homotopic and intersect at most $k$ times. Given a $k$-system of curves $X$ and a subsurface $\Sigma'\subseteq\Sigma$, we let $\pi(X)$ denote the corresponding $k$-system of projected arcs on $\Sigma'$.

    \section{Construction of a Saturated 1-System}\label{sec:construction}

    In this section, we construct a saturated $1$-system on $\Sigma_3$ containing $33$ curves. To do so, we first describe a correspondence between the trivalent multigraphs on $4$ vertices and the pants decompositions of $\Sigma_3$, which will be used throughout the paper. Unless otherwise stated, we let $\Sigma\coloneqq\Sigma_3$ be the surface of genus $3$.

    \begin{definition}
        Let $\mc{P}$ be a pants decomposition of a surface. The \textit{dual graph} of $\mc{P}$ is the multigraph whose vertices are the pairs of pants in $\mc{P}$, where two vertices are joined by an edge for each pants curve in $\mc{P}$.
    \end{definition}

    Observe that the dual graph of any pants decomposition of $\Sigma$ is trivalent since each pair of pants is glued along all three of its boundary curves, and we leave it to the reader to check that the association of the pants decompositions of $\Sigma$ to the trivalent multigraphs on $4$ vertices is a bijection.\\

    Throughout this section, we use the pants decomposition $\mc{P}_0$ of $\Sigma$ whose dual graph is $K_4$, which determines $6$ pairwise disjoint curves $\alpha_1,\dots,\alpha_6$ on $\Sigma$ as the pants curves in $\mc{P}_0$ (see Figure \ref{fig:tetrahedral_correspondence}). Note that every $n$-cycle $C\subset K_4$ is the dual graph of the pants decomposition of the $n$-holed torus $\Sigma_{1,n}$ obtained by glueing the $n$ pairs of pants of $\mc{P}_0$ via the edges of $C$ (see Figure \ref{fig:cycle_holed_torus}).

    \begin{figure}
        \begin{tikzpicture}
            \begin{scope}[xshift=0.3in]
                \shell
                \def\edgeA{black}\def\edgeB{black}\def\edgeC{black}\def\edgeD{black}\def\edgeE{black}\def\edgeF{black}
                \def\drawEdgeA{red}\def\drawEdgeB{red}\def\drawEdgeC{red}\def\drawEdgeD{red}\def\drawEdgeE{red}\def\drawEdgeF{red}
                \edges
            \end{scope}
            \draw[<->] (3.4,0.3) -- (4.2,0.3);
            \begin{scope}[xshift=2in, yshift=0.55in, scale=1.4]
                \draw[black] (1, 0.000) -- (0,-1.730);
                \fill[white] (0.5,-0.865) circle (0.125cm);
                \draw[black] (0.5,-0.865) circle (0cm) node{\footnotesize{$\alpha_1$}};

                \draw (0,-1.73) -- (2,-1.73);
                \fill[white] (1,-1.73) circle (0.175cm);
                \draw (1,-1.73) circle (0cm) node{\footnotesize{$\alpha_2$}};

                \draw[black] (2,-1.730) -- (1, 0.000);
                \fill[white] (1.5,-0.865) circle (0.125cm);
                \draw[black] (1.5,-0.865) circle (0cm) node{\footnotesize{$\alpha_3$}};

                \draw (1,-1.173) -- (1, 0.000);
                \fill[white] (1,-0.5865) circle (0.12cm);
                \draw (1,-0.5865) circle (0cm) node{\footnotesize{$\alpha_4$}};

                \draw[black] (1,-1.173) -- (0,-1.730);
                \fill[white] (0.5,-1.4515) circle (0.15cm);
                \draw[black] (0.5,-1.4515) circle (0cm) node{\footnotesize{$\alpha_5$}};

                \draw[black] (1,-1.173) -- (2,-1.730);
                \fill[white] (1.5,-1.4515) circle (0.15cm);
                \draw[black] (1.5,-1.4515) circle (0cm) node{\footnotesize{$\alpha_6$}};

                \fill[black] (0,-1.73) circle (0.04cm);
                \fill[black] (2,-1.73) circle (0.04cm);
                \fill[black] (1,0) circle (0.04cm);
                \fill[black] (1,-1.173) circle (0.04cm);
            \end{scope}
        \end{tikzpicture}
        \caption{The pants decomposition $\mc{P}_0$ of $\Sigma$, whose pants curves $\alpha_1,\dots,\alpha_6$ are marked in red, and its dual graph $K_4$.}
        \label{fig:tetrahedral_correspondence}
    \end{figure}

    \begin{figure}
        \begin{tikzpicture}
            \begin{scope}[xshift=0.3in]
                \shell

                \fill[white] (-0.3,-0.5) rectangle (0.3,0.545);
                \begin{scope}[rotate=120]
                    \fill[white] (-0.3,-0.5) rectangle (0.3,0.545);
                \end{scope}
                \begin{scope}[rotate=240]
                    \fill[white] (-0.3,-0.5) rectangle (0.3,0.545);
                \end{scope}

                \def\edgeA{black}\def\edgeB{black}\def\edgeC{black}
                \def\drawEdgeA{red}\def\drawEdgeB{red}\def\drawEdgeC{red}\def\drawEdgeD{black}\def\drawEdgeE{black}\def\drawEdgeF{black}
                \edges
            \end{scope}
            \draw[<->] (3.4,0.3) -- (4.2,0.3);
            \begin{scope}[xshift=2in, yshift=0.55in, scale=1.4]
                \draw[black] (1, 0.000) -- (0,-1.730);
                \fill[white] (0.5,-0.865) circle (0.125cm);
                \draw[black] (0.5,-0.865) circle (0cm) node{\footnotesize{$\alpha_1$}};

                \draw (0,-1.73) -- (2,-1.73);
                \fill[white] (1,-1.73) circle (0.175cm);
                \draw (1,-1.73) circle (0cm) node{\footnotesize{$\alpha_2$}};

                \draw[black] (2,-1.730) -- (1, 0.000);
                \fill[white] (1.5,-0.865) circle (0.125cm);
                \draw[black] (1.5,-0.865) circle (0cm) node{\footnotesize{$\alpha_3$}};

                \fill[black] (0,-1.73) circle (0.04cm);
                \fill[black] (2,-1.73) circle (0.04cm);
                \fill[black] (1,0) circle (0.04cm);
            \end{scope}
        \end{tikzpicture}
        \caption{The induced pants decomposition of $\Sigma_{1,3}\subset\Sigma$, whose dual graph is a $3$-cycle in $K_4$. For a $4$-cycle in $K_4$, its corresponding pants decomposition of $\Sigma_{1,4}\subset\Sigma$ can be obtained by cutting $\Sigma$ along the curves corresponding to the two edges of $K_4$ not in the $4$-cycle.}
        \label{fig:cycle_holed_torus}
    \end{figure}

    \subsection{Encoding Dehn twists}\label{sec:encoding_dehn_twists}

    Using the pants decomposition $\mc{P}_0$ of $\Sigma$, we can classify curves on $\Sigma$ by the number of distinct pairs of pants in $\mc{P}_0$ that it intersects. The trivial case is if a curve intersects exactly one pair of pants $P\in\mc{P}_0$, in which case it is homotopic to a boundary curve of $P$.

    Let $X_0$ be any $1$-system of curves on $\Sigma$ \textit{supported} by $\mc{P}_0$, that is, we require that $X_0$ contains all six pants curves $\alpha_1,\dots,\alpha_6$ of $\mc{P}_0$. If $\gamma\in X_0$, then it cannot intersect exactly two pairs of pants in $\mc{P}_0$. Indeed, if the two pairs of pants that $\gamma$ intersects are glued along a curve $\alpha$ to form a sphere $\Sigma_{0,4}$ with four holes, then $\alpha$ is a separating curve of $\Sigma_{0,4}$. Thus $i(\gamma,\alpha)$ is even, and since it is non-zero, we see that $i(\gamma,\alpha)>1$.

    Every other curve $\gamma\in X_0$ intersects either three or four pairs of pants in $\mc{P}_0$. In either case, we consider the embedding $K_4\into\Sigma$ as shown in Figure \ref{fig:embedding_graph_in_surface}, which we use to pin down `reference curves' $\gamma_0$ on $\Sigma$ so that $\gamma$ is a multitwist thereof. Note that we can also view $\Sigma$ as the boundary of a regular neighborhood of $K_4$, so that the natural action $A_4\act K_4$ extends to an action $A_4\act\Sigma$ by orientation-preserving isometries.

    \begin{figure}
        \begin{tikzpicture}
            \shell

            \begin{scope}
                \draw[blue] (90:1+\width) .. controls (90:1+\width-0.1) .. (150:0.89) .. controls (210:1+\width-0.1) .. (210:1+\width);
                \draw[blue, dash pattern={on 2pt off 1pt}, dash phase=1pt] (90:1+\width) -- (0,0);
            \end{scope}
            \begin{scope}[rotate=120]
                \draw[blue] (90:1+\width) .. controls (90:1+\width-0.1) .. (150:0.89) .. controls (210:1+\width-0.1) .. (210:1+\width);
                \draw[blue, dash pattern={on 2pt off 1pt}, dash phase=1pt] (90:1+\width) -- (0,0);
            \end{scope}
            \begin{scope}[rotate=240]
                \draw[blue] (90:1+\width) .. controls (90:1+\width-0.1) .. (150:0.89) .. controls (210:1+\width-0.1) .. (210:1+\width);
                \draw[blue, dash pattern={on 2pt off 1pt}, dash phase=1pt] (90:1+\width) -- (0,0);
            \end{scope}

            \fill[blue] (0,0) circle (0.05cm);
            \fill[blue] (90:1+\width) circle (0.05cm);
            \fill[blue] (210:1+\width) circle (0.05cm);
            \fill[blue] (330:1+\width) circle (0.05cm);

            \def\edgeA{black}\def\edgeB{black}\def\edgeC{black}\def\edgeD{black}\def\edgeE{black}\def\edgeF{black}
            \edges
        \end{tikzpicture}
        \caption{An embedding $K_4\into\Sigma$ mapping the vertices to the `extremities' of $\Sigma$. Every cycle in $K_4$ is associated with a curve $\gamma_0$ under this embedding, which we choose to be the reference curves for which every other curve $\gamma\in X_0$ will be a multitwist thereof.}
        \label{fig:embedding_graph_in_surface}
    \end{figure}

    Suppose that $\gamma$ intersects exactly three pairs of pants $\mc{P}_0^\gamma\subset\mc{P}_0$. Under the action $A_4\act\Sigma$, we can assume that the pants curves of $\mc{P}_0^\gamma$ are $\alpha_1,\dots,\alpha_3$. The dual graph of $\mc{P}_0^\gamma$ is then a $3$-cycle $C\subset K_4$ $-$ whose edges we label by $\alpha_1,\dots,\alpha_3$ $-$ which determines a curve $\gamma_0$ under the embedding $K_4\into\Sigma$. For each of the three pairs of pants, $\gamma_0$ projects onto the unique arc joining two of its boundary curves in $\alpha_1,\dots,\alpha_3$. Since $i(\gamma,\alpha_j)\leq1$ for $j=1,2,3$, we see that $\gamma$ projects onto a single copy of those arcs, so $\gamma$ is a multitwist of $\gamma_0$ in $\alpha_j$. Thus $\gamma=T_{\alpha_1}^{\epsilon_1}T_{\alpha_2}^{\epsilon_2}T_{\alpha_3}^{\epsilon_3}(\gamma_0)$ for some integers $\epsilon_j\in\Z$, which we call the \textit{twist parameters} of $\gamma$. Fixing an ordering of $\alpha_j$ gives us an encoding of $\gamma$ as a tuple $\tpl{\epsilon_1,\epsilon_2,\epsilon_3}$, which we refer to as a \textit{curve of type $\triangle$ on $C$}. See Figure \ref{fig:encoding_triangle} for an illustration of the curve encoded by $\tpl{1,1,0}$.

    \begin{figure}[b]
        \begin{tikzpicture}
            \begin{scope}
                \shell
                \draw[blue] plot [smooth cycle, tension=0.4] coordinates {(90:1+\width/2) (210:1+\width/2) (330:1+\width/2)};

                \def\edgeA{black}\def\edgeB{black}\def\edgeC{black}
                \edges
            \end{scope}
            \draw[->] (2,0.3) -- node[above]{\small$T_{\alpha_1}^1T_{\alpha_2}^1$} (3.5,0.3);
            \begin{scope}[xshift=5.5cm]
                \shell
                \draw[blue] plot [smooth cycle, tension=0.4] coordinates {(90:1+\width/2) (210:1+\width/2) (330:1+\width/2)};
                \def\twistColor{blue}\def\twistStart{0.904}\def\twistEnd{0.904}\def\twistPosition{0}\twistFront
                \def\twistColor{blue}\def\twistStart{0.904}\def\twistEnd{0.904}\def\twistPosition{120}\twistFront

                \def\edgeA{black}\def\edgeB{black}\def\edgeC{black}
                \edges
            \end{scope}
        \end{tikzpicture}
        \caption{A reference curve $\gamma_0$ and its multitwist $\gamma=T_{\alpha_1}^1T_{\alpha_2}^1(\gamma_0)$, encoded by $\tpl{1,1,0}$.}
        \label{fig:encoding_triangle}
    \end{figure}

    Now, if $\gamma$ intersects all four pairs of pants in $\mc{P}_0$, then the curves $\alpha_{j_1},\dots,\alpha_{j_4}$ that it intersects determine a $4$-cycle in $K_4$ and hence a reference curve $\gamma_0$ under the embedding $K_4\into\Sigma_3$. The condition $i(\gamma,\alpha_{j_l})\leq1$ for $l=1,\dots,4$ then similarly makes $\gamma$ a multitwist of $\gamma_0$ in $\alpha_{j_l}$, so $\gamma=T_{\alpha_{j_1}}^{\epsilon_{j_1}}\cdots T_{\alpha_{j_4}}^{\epsilon_{j_4}}(\gamma_0)$ for some integers $\epsilon_{j_l}\in\Z$, which we again call the \textit{twist parameters of $\gamma$}. Fixing an ordering on $\alpha_1,\dots,\alpha_6$ allows us to insert two `blanks' in the tuple $\tpl{\epsilon_{j_1},\dots,\epsilon_{j_4}}$ to represent the two edges of $K_4$ not traversed by $\gamma$; the $6$-tuple is then referred to as a \textit{curve of type $\square$}. See Figure \ref{fig:encoding_square} for an illustration of the curve encoded by $\tpl{1,\slot,0,\slot,0,1}$.

    \begin{figure}[b]
        \begin{tikzpicture}
            \begin{scope}
                \shell

                \begin{scope}
                    \draw[blue] (150:0.89) .. controls (210:1+\width-0.1) .. (210:1+\width);
                \end{scope}
                \begin{scope}[rotate=240]
                    \draw[blue] (90:1+\width) .. controls (90:1+\width-0.1) .. (150:0.89);
                \end{scope}

                \def\curve{0.08}
                \draw[blue, dashed, dash pattern={on 2pt off 1pt}] (210:1+\width) .. controls (210:1+\width-0.1) .. (210:\width/2) .. controls (210:\width/2-0.1) and (180:\curve) .. (0,0) .. controls (0:\curve) and (330:\width/2-0.1) .. (330:\width/2) .. controls (330:1+\width-0.1) .. (330:1+\width);
                \begin{scope}
                    \clip (-0.78,0.4) rectangle (0.78,1.44);
                    \draw[blue] plot [smooth cycle, tension=0.4] coordinates {(90:1+\width/2) (210:1+\width/2) (330:1+\width/2)};
                \end{scope}

                \def\edgeA{black}\def\edgeB{gray}\def\edgeC{black}\def\edgeD{gray}\def\edgeE{black}\def\edgeF{black}
                \edges
            \end{scope}
            \draw[->] (2,0.3) -- node[above]{\small$T_{\alpha_1}^1T_{\alpha_6}^1$} (3.5,0.3);
            \begin{scope}[xshift=5.5cm]
                \shell

                \begin{scope}
                    \draw[blue] (150:0.89) .. controls (210:1+\width-0.1) .. (210:1+\width);
                    \def\twistColor{blue}\def\twistStart{0.904}\def\twistEnd{0.904}\def\twistPosition{0}\twistFront
                \end{scope}
                \begin{scope}[rotate=240]
                    \draw[blue] (90:1+\width) .. controls (90:1+\width-0.1) .. (150:0.89);
                \end{scope}

                \def\curve{0.08}

                \draw[blue, dashed, dash pattern={on 2pt off 1pt}] (210:1+\width) .. controls (210:1+\width-0.1) .. (210:\width/2) .. controls (210:\width/2-0.1) and (180:\curve) .. (0,0) .. controls (0:\curve) and (330:\width/2-0.1) .. (330:\width/2) .. controls (330:1+\width-0.1) .. (330:1+\width);

                \begin{scope}
                    \clip (-0.695,0.4) rectangle (0.78,1.44);
                    \draw[blue] plot [smooth cycle, tension=0.4] coordinates {(90:1+\width/2) (210:1+\width/2) (330:1+\width/2)};
                \end{scope}

                \def\twistColor{blue}\def\twistStart{0.5}\def\twistEnd{0}\def\twistPosition{240}\def\twistExtend{0.02}\def\twistShift{0}\twistBack

                \def\edgeA{black}\def\edgeB{gray}\def\edgeC{black}\def\edgeD{gray}\def\edgeE{black}\def\edgeF{black}
                \edges
            \end{scope}
        \end{tikzpicture}
        \caption{A reference curve $\gamma_0$ and its multitwist $\gamma=T_{\alpha_1}^1T_{\alpha_6}^1(\gamma_0)$, encoded by $\tpl{1,\slot,0,\slot,0,1}$.}
        \label{fig:encoding_square}
    \end{figure}

    Using this encoding, we state our main theorem. It describes a construction of a saturated $1$-system $X_0$ on $\Sigma$ consisting of $6$ pairwise disjoint curves $\alpha_j$, $24$ curves of type $\triangle$, and $3$ curves of type $\square$.

    \begin{theorem}\label{thm:construction_of_1_system}
        Let $\Sigma\coloneqq\Sigma_3$ be the surface of genus $3$ and consider the $6$ pairwise disjoint curves $\alpha_1,\dots,\alpha_6$ on $\Sigma$, obtained as boundary curves of the pairs of pants in the pants decomposition $\mc{P}_0$ of $\Sigma$ and distributed as in the preceding figures so that, in particular, $\alpha_1,\dots,\alpha_3$ induces a $3$-cycle $C\subset K_4$. Let $\gamma_1\coloneqq\tpl{1,0,0}$ and $\gamma_2\coloneqq\tpl{1,1,0}$ be curves of type $\triangle$ on $C$, and let $\gamma_3\coloneqq\tpl{1,\slot,0,\slot,0,1}$ be a curve of type $\square$.

        Consider the system of curves $X_0\coloneqq\l\{\alpha_1,\dots,\alpha_6\r\}\cup\l\{G\gamma_j\st j=1,2,3\r\}$ on $\Sigma$, where $G\gamma_j$ is the orbit of $\gamma_j$ under the group $G\coloneqq A_4$ acting on $\mc{C}_0(\Sigma)$ by orientation-preserving symmetries of $\Sigma$. Then $X_0$ is a saturated $1$-system of $33$ curves on $\Sigma$.
    \end{theorem}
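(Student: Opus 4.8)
The plan is to verify three things in turn: that $\l|X_0\r|=33$, that $X_0$ is a $1$-system, and that $X_0$ is saturated.

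\emph{Cardinality.} I would show that the encoding is a complete homotopy invariant and then count orbits. The type ($\triangle$ or $\square$) of a curve is recorded by the number of pairs of pants it meets; within a type, the underlying cycle $C\subset K_4$ is recorded by the set of curves $\alpha_m$ it crosses; and, given the cycle, the tuple of twist parameters is a complete invariant, because two multitwists of $\gamma_0$ along the pairwise disjoint $\alpha_m$ are homotopic iff their parameters agree — the parameter at $\alpha_m$ is read off from the annular subsurface projection to $\alpha_m$. Hence $\l\{\alpha_1,\dots,\alpha_6\r\}$, $G\gamma_1$, $G\gamma_2$, $G\gamma_3$ are pairwise disjoint subsets of $\mc{C}_0\!\l(\Sigma\r)$: curves of different type are distinct, and a $G$-translate preserves the multiset of twist parameters, which is $\l\{1,0,0\r\}$ for $G\gamma_1$ and $\l\{1,1,0\r\}$ for $G\gamma_2$. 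Now $G=A_4$ acts transitively on the four $3$-cycles with point stabilizer $\Z/3$ permuting the three incident edges cyclically, and neither $\tpl{1,0,0}$ nor $\tpl{1,1,0}$ is fixed by a nontrivial rotation, so $\Stab_G\!\l(\gamma_1\r)$ and $\Stab_G\!\l(\gamma_2\r)$ are trivial and $\l|G\gamma_1\r|=\l|G\gamma_2\r|=12$. Similarly $G$ acts transitively on the three $4$-cycles with point stabilizer the normal subgroup of order $4$, and a direct check shows each of its nontrivial elements permutes $\alpha_1,\dots,\alpha_6$ so as to fix the alternating tuple $\tpl{1,\slot,0,\slot,0,1}$, whence $\l|G\gamma_3\r|=3$. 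Adding up, $\l|X_0\r|=6+12+12+3=33$.

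\emph{The $1$-system property.} Since $i\l(\slot,\slot\r)$ is $G$-invariant, it suffices to bound $i\l(\alpha_m,\gamma_k\r)$ and $i\l(\gamma_j,\ell\gamma_k\r)$ for $j,k\in\l\{1,2,3\r\}$ and $\ell\in G$. The first is immediate: as the $\alpha$'s are pairwise disjoint, twisting along them does not change the intersection with $\alpha_m$, so $i\l(\alpha_m,\gamma_k\r)=i\l(\alpha_m,\gamma_0\r)\in\l\{0,1\r\}$. For the second, the structural input is that every curve of $X_0$ meets each pair of pants $P$ in at most one arc — the number of endpoints of such arcs on $\del P$ is at most $3$, hence at most $2$ by parity — and this arc joins two distinct boundary curves, so any two curves of $X_0$ can be isotoped to be disjoint inside every pair of pants. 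All intersections of $\gamma_j$ with $\ell\gamma_k$ are then confined to annular collars of the $\alpha_m$, where the crossing count is governed by the difference of twist parameters at $\alpha_m$; when the two curves lie on the same cycle this reduces, via the homeomorphism $T^{-\vec q}$, to computing $i\l(T^{\vec p-\vec q}\gamma_0,\gamma_0\r)$, which depends only on $\vec p-\vec q$. I expect this intersection bookkeeping — carried out case by case, drawing the twisting explicitly as in the figures — to be the technical heart of the argument and its main obstacle.

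\emph{Saturation.} Let $\gamma\notin X_0$ be a curve; I must show $X_0\cup\l\{\gamma\r\}$ is not a $1$-system. If $\gamma$ meets exactly two pairs of pants, then $i\l(\gamma,\alpha\r)\geq 2$ for the curve $\alpha\in X_0$ along which they are glued, and we are done; so assume $\gamma$ meets one, three, or four pairs of pants. If moreover $i\l(\gamma,\alpha_m\r)\geq 2$ for some $m$ we are done, so assume $i\l(\gamma,\alpha_m\r)\leq1$ for all $m$. Then $\gamma$ meeting one pair of pants is homotopic to some $\alpha_m\in X_0$, a contradiction; and $\gamma$ meeting three or four pairs of pants is, by the argument recalled above, a curve $\tpl{\vec\epsilon}$ of type $\triangle$ on a $3$-cycle or of type $\square$ on a $4$-cycle. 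It remains to show that the only such tuples $\vec\epsilon$ meeting every curve of $X_0$ at most once are those already in $X_0$. Here the homological intersection of $\tpl{\vec\epsilon}$ with the relevant reference curve $\gamma_0$ equals, up to sign, the sum of the twist parameters of $\tpl{\vec\epsilon}$ along its cycle, so comparison with the curves of $X_0$ on the same cycle already forces this sum into a short range, and comparison with curves of $X_0$ on cycles sharing an edge constrains the individual parameters; the remaining finitely many borderline tuples are eliminated by the same annular-collar count as before. The main difficulty of this part is organizing the case analysis so that every tuple outside $X_0$ is matched with an explicit witness in $X_0$.
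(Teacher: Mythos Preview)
Your outline is sound and shares the paper's architecture: count orbits, push all intersections into annular collars of the $\alpha_m$, reduce to twist-parameter combinatorics, and saturate by classifying candidates according to how many pairs of pants they meet. Two points of comparison are worth flagging. First, the paper packages your ``annular-collar count'' as an explicit formula (Lemma~\ref{lem:arcs_dehn_twists}): on the four-holed sphere $P_1\cup_\mu P_2$, two arcs twisted $n$ and $m$ times about $\mu$ meet $\lfloor|n-m|\rfloor$ times, with a half-integer offset precisely when the arcs go to different boundary circles on one side of $\mu$. This one statement handles every case in which the two cycles meet along a single edge, and, applied to an adjacent $3$-cycle, pins each $\triangle$-parameter to $\{0,1\}$ in the saturation step. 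Second --- and this is where your sketch is thinnest --- in $K_4$ a $3$-cycle and a $4$-cycle, or two distinct $4$-cycles, always share \emph{two} edges, so a single-collar reduction never isolates one $\square$-parameter at a time as your phrase ``cycles sharing an edge'' suggests. The paper treats $\square$--$\square$ by cutting along the four non-shared $\alpha_j$, which splits $\Sigma$ into two disjoint copies of $\Sigma_{0,4}$ and lets Lemma~\ref{lem:arcs_dehn_twists} run in each; for $\square$--$\triangle$ it instead cuts the $3$-holed torus along its one non-shared edge to get $\Sigma_{0,5}$ and proves (Lemma~\ref{lem:exactly_one_type_square}) that there is a \emph{unique} arc between the two interior holes compatible with $\pi(X_C)$, which delivers the $1$-system check against $X_C$ and the $\square$-saturation in one stroke. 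Your homological identity $\hat\imath\bigl((\vec\epsilon),\gamma_0\bigr)=\pm\sum_j\epsilon_j$ is correct and gives a tidy alternative to the paper's torus-capping argument for excluding $(0,0,0)$ and $(1,1,1)$.
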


    The proof of this theorem will occupy the next two sections. First, we analyze the curves of types $\triangle$ and $\square$ in $X_0$ separately in the next section, and in particular prove that the collection of curves of type $\triangle$ in $X_0$ is saturated amongst \textit{all} curves of type $\triangle$ on $\Sigma$. The corresponding saturation statement for curves of type $\square$ is slightly harder and will be deferred to the proof of the main theorem.

    \subsection{Curves of types $\triangle$ and $\square$}\label{sec:curves_of_types_triangle_and_square}

    We consider the curves of types $\triangle$ and $\square$ in $X_0$ separately and prove that they are individually $1$-systems. First, we need a lemma on the intersection numbers of arcs; it is applicable when cutting $\Sigma$ projects curves onto arcs in such a way that does not modify their intersection numbers.
    \begin{lemma}\label{lem:arcs_dehn_twists}
        Identify the $4$-holed sphere $\Sigma_{0,4}$ as two pairs of pants $P_1$ and $P_2$ glued along some curve $\mu$ and fix four arcs $\alpha_0$, $\beta_0$, $\gamma_{0.5}$, and $\delta_{0.5}$ between the other boundary curves of $P_1$ and $P_2$, as shown in Figure \ref{fig:arcs_dehn_twists}.
        \begin{figure}[h]\hspace{0.2in}
            \begin{minipage}{0.25\textwidth}
                \begin{tikzpicture}
                    \def\ellipseHor{1.75}
                    \def\ellipseVer{1}
                    \def\rad{0.3}
                    \def\horSep{0.6}
                    \def\verSep{2}

                    \draw (0,0) circle [x radius=\ellipseHor, y radius=\ellipseVer];
                    \draw (-\horSep,0) circle (\rad);
                    \draw (\horSep,0) circle (\rad);
                    \draw (-\horSep,-\verSep) circle (\rad);
                    \draw (\horSep,-\verSep) circle (\rad);

                    \node at (\ellipseHor-0.3,0) {\small$P_1$};
                    \node at (\ellipseHor-0.3,-\verSep) {\small$P_2$};
                    \node at (\ellipseHor+0.2,0) {\small$\mu$};

                    \draw[thick] (-\horSep,-\rad) -- node[below right=-0.05cm]{\small$\alpha_0$} (-\horSep,\rad-\verSep);
                    \draw[thick] (\horSep,-\rad) -- node[below right=-0.05cm]{\small$\beta_0$} (\horSep,\rad-\verSep);
                    \draw[thick] (-\horSep+0.05,-\rad) .. controls (-\horSep+0.05,-\rad-0.05) and (\horSep-0.05,\rad-\verSep+0.05) .. node[above=0.15cm]{\small$\gamma_{0.5}$} (\horSep-0.05,\rad-\verSep);
                    \draw[thick] (-\horSep-0.05,\rad-\verSep) .. controls (-\horSep-0.06,-\verSep/2) and (-\ellipseHor-0.1,-\ellipseVer+0.1) .. (-\ellipseHor-0.1,0) node[below left=-0.1cm]{\small$\delta_{0.5}$} .. controls (-\ellipseHor-0.1,\ellipseVer+0.1) and (\horSep,\verSep/2) .. (\horSep,\rad);
                \end{tikzpicture}
            \end{minipage}
            \begin{minipage}{0.7\textwidth}
                \caption{Four pairwise disjoint arcs $\alpha_0$, $\beta_0$, $\gamma_{0.5}$, and $\delta_{0.5}$.}
                \label{fig:arcs_dehn_twists}
            \end{minipage}
        \end{figure}

        For each $n\in\Z$, let $\alpha_n\coloneqq T_\mu^n(\alpha_0)$, and similarly define $\beta_n$, $\gamma_{n+0.5}$, and $\delta_{n+0.5}$. If $\ast$ and $\ast'$ are distinct symbols denoting two of $\alpha$, $\beta$, $\gamma$, and $\delta$, then $i(\ast_n,\ast_m')=\l\lfloor\l|n-m\r|\r\rfloor$ for all integers and half-integers $m,n$.
    \end{lemma}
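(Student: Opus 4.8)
\textbf{The plan} is to compute all of the intersection numbers simultaneously by passing to the branched double cover of $\Sigma_{0,4}$ by a torus, where every arc becomes a Euclidean line. Write $\Sigma_{0,4} = \mathbb{T}^2/\iota$ for the flat torus $\mathbb{T}^2 = \mathbb{R}^2/\mathbb{Z}^2$ and the hyperelliptic involution $\iota(x,y) = (-x,-y)$, whose four fixed points $(x,y)$ with $x,y \in \{0,\tfrac12\}$ descend to the four boundary curves of $\Sigma_{0,4}$ collapsed to points. We may arrange that $(0,0)$ and $(0,\tfrac12)$ are the two cone points in $P_1$, that $(\tfrac12,0)$ and $(\tfrac12,\tfrac12)$ are the two in $P_2$, and that $\mu$ is the image of the two vertical circles $\{x=\tfrac14\}$ and $\{x=\tfrac34\}$. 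Every essential curve, and every arc between two cone points, on $\Sigma_{0,4}$ is isotopic to the image of an $\iota$-invariant Euclidean geodesic of some slope; in particular $\mu$ has slope $\infty$, and since $T_\mu$ is covered by the product of the Dehn twists in the two vertical circles upstairs, $T_\mu$ increases the slope by $2$ (the sign is irrelevant).

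Inspecting Figure \ref{fig:arcs_dehn_twists}, one identifies the four base arcs, up to isotopy, as: the slope-$0$ arc $\alpha_0$ from $(0,0)$ to $(\tfrac12,0)$, the slope-$0$ arc $\beta_0$ from $(0,\tfrac12)$ to $(\tfrac12,\tfrac12)$, the slope-$1$ arc $\gamma_{1/2}$ from $(0,0)$ to $(\tfrac12,\tfrac12)$, and the slope-$1$ arc $\delta_{1/2}$ from $(0,\tfrac12)$ to $(\tfrac12,0)$. Their lifts are pairwise either parallel or meet only at a shared cone point, which confirms that the four arcs are disjoint. Consequently $\alpha_n$ and $\beta_n$ are the slope-$2n$ arcs issuing from $(0,0)$ and from $(0,\tfrac12)$, while $\gamma_{m+1/2}$ and $\delta_{m+1/2}$ are the slope-$(2m+1)$ arcs issuing from $(0,0)$ and from $(0,\tfrac12)$.

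\textbf{The computation.} For two arcs $a,b$ between cone points with $\iota$-invariant geodesic lifts $\tilde a,\tilde b$, any intersection point of $\tilde a$ with $\tilde b$ lying at a fixed point of $\iota$ must be an endpoint of both $a$ and $b$ (since a geodesic lift meets a fixed point only at an endpoint of the arc it covers), and such a point is $\iota$-fixed; every remaining intersection point lies in a free $\iota$-orbit of size two. Hence
\[
    i_{\Sigma_{0,4}}(a,b) \;=\; \tfrac12\bigl(\,i_{\mathbb{T}^2}(\tilde a,\tilde b) - e(a,b)\,\bigr),
\]
where $e(a,b)\in\{0,1,2\}$ is the number of endpoints common to $a$ and $b$. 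Since $i_{\mathbb{T}^2}$ of a slope-$p$ and a slope-$q$ geodesic is $|p-q|$, and since a short congruence computation tells us which of those $|p-q|$ intersection points sits at a fixed point, one runs through the six pairs of distinct symbols, which split into two cases. If the slopes are both even or both odd (the pairs $\{\alpha,\beta\}$ and $\{\gamma,\delta\}$), then the two arcs share no endpoint, so $e=0$ and $i = |p-q|/2 = |n-m|$. If one slope is even and the other odd (the four remaining pairs), then the arcs share exactly one cone point, so $e=1$ and $i = (|p-q|-1)/2$; substituting the slopes in terms of $n,m$ gives $(n-m)-1$ or $m-n$ according to the sign of $n-m$, and in every case the value equals $\lfloor |n-m| \rfloor$.

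\textbf{The main obstacle} is the bookkeeping: matching Figure \ref{fig:arcs_dehn_twists} with the pillowcase precisely enough to see that the four base arcs are indeed the four slope-$0$ and slope-$1$ Euclidean arcs named above, and then handling the congruences that pin down $e$ and the exact value $\lfloor|n-m|\rfloor$ (rather than $\lceil|n-m|\rceil$); the rest is routine. An alternative, avoiding the cover, is to isotope $\ast_n$ and $\ast'_m$ so that they agree with the disjoint base arcs outside a fixed annular neighborhood $N_\mu$ of $\mu$ supporting $T_\mu$, observe via the bigon and half-bigon criteria that no bigon or half-bigon can then escape $N_\mu$ (each $\ast_n$ meets each $P_i$ in a single arc), and count the intersections directly inside $N_\mu$ — but there one must treat by hand the endpoint-slides in $P_1$ and $P_2$ that the common cone points permit, which is precisely the content that the double cover absorbs automatically.
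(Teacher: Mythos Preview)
Your approach via the hyperelliptic double cover $\mathbb{T}^2\to\Sigma_{0,4}$ is correct and genuinely different from the paper's argument. The paper works entirely downstairs: it reduces by symmetry to the pairs $(\alpha,\beta)$ and $(\alpha,\gamma)$, untwists so that one index is $0$ (or $\tfrac12$), and then directly applies the Bigon Criterion for arcs to the explicit pictures, checking in each case whether a bigon or half-bigon appears. Your argument instead lifts every arc to a simple closed geodesic on the flat torus and reads off all intersection numbers at once from the slope formula $i_{\mathbb{T}^2}=|p-q|$, corrected by the shared-endpoint count $e$.

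What each buys: the paper's route is completely self-contained and needs nothing beyond the Bigon Criterion already set up in Section~\ref{sec:preliminaries}, at the cost of drawing and inspecting several figures. Your route avoids all pictures and case-splitting once the model is in place, and makes the floor in $\lfloor|n-m|\rfloor$ appear naturally as the parity correction $(\,|p-q|-e\,)/2$; the price is importing the pillowcase model and justifying two facts you use implicitly---that Euclidean geodesics on $\mathbb{T}^2$ descend to arcs in minimal position on the quotient (so the displayed formula really computes $i_{\Sigma_{0,4}}$, not merely an upper bound), and that the specific left twist $T_\mu$ matches the sign you chose. Your remark that ``the sign is irrelevant'' is a little quick: the sign of the slope shift and the identification of $\gamma_{1/2},\delta_{1/2}$ as slope $+1$ rather than $-1$ are coupled, and must be made consistently for the mixed-parity case to yield $\lfloor|n-m|\rfloor$ rather than $\lceil|n-m|\rceil$. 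Once that bookkeeping is pinned down your argument goes through cleanly.
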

    \begin{proof}
        By symmetry, it suffices to prove the statement for the cases when $\tpl{\ast,\ast'}=\tpl{\alpha,\beta}$ and $\tpl{\ast,\ast'}=\tpl{\alpha,\gamma}$. Let $n\in\Z$ and consider the arc $\alpha_n$.

        For an arc $\beta_m$ with $m\in\Z$, we can simultaneously undo twists so that $m=0$. Every twist of $\alpha_0$ in $\mu$ increases its intersection with $\beta_0$ by $1$, and since no bigons or half-bigons are created in this process (Figure \ref{fig:arc_dehn_twist_1}), we see from the Bigon Criterion that $i(\alpha_n,\beta_0)=\l|n\r|$.

        For an arc $\gamma_m$ with $m\in\Z+0.5$, we can simultaneously undo twists so that either $m>n=0$ or $n>m=0.5$. Using the Bigon Criterion, we see in the former case (Figure \ref{fig:arc_dehn_twist_2}) that $i(\alpha_0,\gamma_m)=m-0.5=\l\lfloor m-n\r\rfloor$. Otherwise (Figure \ref{fig:arc_dehn_twist_3}), the arcs $\alpha_n$ and $\gamma_{0.5}$ bound a half-bigon and hence can be homotoped to decrease the intersection by $1$, so $i\,(\alpha_n,\gamma_{0.5})=n-1=\l\lfloor n-m\r\rfloor$.\qed
        \begin{figure}
            \def\ellipseHor{1.75}\def\ellipseVer{1}\def\rad{0.3}\def\horSep{0.6}\def\verSep{2}
            \begin{minipage}[t]{0.3\textwidth}
                \begin{center}
                    \begin{tikzpicture}[scale=0.8]
                        \draw (0,0) circle [x radius=\ellipseHor, y radius=\ellipseVer];
                        \draw (-\horSep,0) circle (\rad);
                        \draw (\horSep,0) circle (\rad);
                        \draw (-\horSep,-\verSep) circle (\rad);
                        \draw (\horSep,-\verSep) circle (\rad);

                        \draw[thick]
                            (-\horSep,-\rad)
                                .. controls (-\horSep,-\ellipseVer) and (\ellipseHor-0.2,-\ellipseVer) ..
                            (\ellipseHor-0.2,0)
                                .. controls (\ellipseHor-0.2,\ellipseVer-0.5) and (\ellipseHor-0.8,\ellipseVer-0.2) ..
                            (0,\ellipseVer-0.2)
                                .. controls (-\ellipseHor+0.8,\ellipseVer-0.2) and (-\ellipseHor+0.2,\ellipseVer-0.5) ..
                            (-\ellipseHor+0.2,0)
                                .. controls (-\ellipseHor+0.2,-\ellipseVer+0.1) and (-\horSep,-\verSep/2) ..
                            (-\horSep,\rad-\verSep) node[xshift=-0.3cm, yshift=0.2cm]{\small$\alpha_1$};
                        \draw[thick] (\horSep,-\rad) -- node[below right=-0.05cm]{\small$\beta_0$} (\horSep,\rad-\verSep);
                    \end{tikzpicture}
                    \subcaption{Neither bigons nor half-bigons are created by twisting $\alpha_0$ in $\mu$.}
                    \label{fig:arc_dehn_twist_1}
                \end{center}
            \end{minipage}\hspace{0.2in}
            \begin{minipage}[t]{0.3\textwidth}
                \begin{center}
                    \begin{tikzpicture}[scale=0.8]
                        \draw (0,0) circle [x radius=\ellipseHor, y radius=\ellipseVer];
                        \draw (-\horSep,0) circle (\rad);
                        \draw (\horSep,0) circle (\rad);
                        \draw (-\horSep,-\verSep) circle (\rad);
                        \draw (\horSep,-\verSep) circle (\rad);

                        \draw[thick] (-\horSep,-\rad) -- node[below left=-0.05cm]{\small$\alpha_0$} (-\horSep,\rad-\verSep);
                        \draw[thick]
                            (-\horSep+0.05,-\rad)
                                .. controls (-\horSep+0.05,-\ellipseVer) and (\ellipseHor-0.2,-\ellipseVer) ..
                            (\ellipseHor-0.2,0)
                                .. controls (\ellipseHor-0.2,\ellipseVer-0.5) and (\ellipseHor-0.8,\ellipseVer-0.2) ..
                            (0,\ellipseVer-0.2)
                                .. controls (-\ellipseHor+0.8,\ellipseVer-0.2) and (-\ellipseHor+0.2,\ellipseVer-0.5) ..
                            (-\ellipseHor+0.2,0)
                                .. controls (-\ellipseHor+0.2,-\ellipseVer+0.1) and (\horSep-0.06,-\verSep/2) .. node[xshift=1.2cm, yshift=-0.3cm]{\small$\gamma_{1+0.5}$}
                            (\horSep-0.05,\rad-\verSep);
                    \end{tikzpicture}
                    \subcaption{Neither bigons nor half-bigons are created by twisting $\gamma_{0.5}$ in $\mu$.}
                    \label{fig:arc_dehn_twist_2}
                \end{center}
            \end{minipage}\hspace{0.2in}
            \begin{minipage}[t]{0.3\textwidth}
                \begin{center}
                    \begin{tikzpicture}[scale=0.8]
                        \draw (0,0) circle [x radius=\ellipseHor, y radius=\ellipseVer];
                        \draw (-\horSep,0) circle (\rad);
                        \draw (\horSep,0) circle (\rad);
                        \draw (-\horSep,-\verSep) circle (\rad);
                        \draw (\horSep,-\verSep) circle (\rad);

                        \draw[thick]
                            (-\horSep,-\rad)
                                .. controls (-\horSep,-\ellipseVer) and (\ellipseHor-0.2,-\ellipseVer) ..
                            (\ellipseHor-0.2,0)
                                .. controls (\ellipseHor-0.2,\ellipseVer-0.5) and (\ellipseHor-0.8,\ellipseVer-0.2) ..
                            (0,\ellipseVer-0.2)
                                .. controls (-\ellipseHor+0.8,\ellipseVer-0.2) and (-\ellipseHor+0.2,\ellipseVer-0.5) ..
                            (-\ellipseHor+0.2,0)
                                .. controls (-\ellipseHor+0.2,-\ellipseVer+0.1) and (-\horSep,-\verSep/2) ..
                            (-\horSep,\rad-\verSep) node[xshift=-0.3cm, yshift=0.2cm]{\small$\alpha_1$};
                        \draw[thick]
                            (-\horSep+0.05,-\rad)
                            .. controls (-\horSep+0.05,-\rad-0.05) and (\horSep+0.05,\rad-\verSep+0.05) .. node[xshift=0.5cm, yshift=-0.2cm]{\small$\gamma_{0.5}$}
                            (\horSep+0.05,\rad-\verSep);
                    \end{tikzpicture}
                    \subcaption{A half-bigon is created between $\alpha_1$ and $\gamma_{0.5}$ by twisting $\alpha_0$ in $\mu$.}
                    \label{fig:arc_dehn_twist_3}
                \end{center}
            \end{minipage}
            \caption{Twisting a curve in $\mu$ generally increases its intersection with the other curve by $1$. The Bigon Criterion corrects this estimate by accounting for any bigons and half-bigons created.}
        \end{figure}
    \end{proof}

    \begin{proposition}\label{prp:type_triangle_24}
        In the notation of Theorem \ref{thm:construction_of_1_system}, the union of the orbits of $\gamma_1=\tpl{1,0,0}$ and $\gamma_2=\tpl{1,1,0}$ under $G$ is a $1$-system consisting of $24$ curves of type $\triangle$ that it is saturated amongst all curves of type $\triangle$.
    \end{proposition}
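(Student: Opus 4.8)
The plan is to establish the three parts of the statement in turn: that $G\gamma_1\cup G\gamma_2$ has $24$ elements, that it is a $1$-system, and that it is saturated among curves of type $\triangle$. The counting is a direct orbit--stabilizer argument, and the other two parts reduce, via the $G$-action, to finitely many intersection-number computations which I would carry out with Lemma~\ref{lem:arcs_dehn_twists}.

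\textbf{Counting.} A type-$\triangle$ curve on a $3$-cycle $C$ meets precisely the three pairs of pants indexed by the vertices of $C$; since $G$ permutes the pairs of pants of $\mc{P}$, any $g\in G$ fixing such a curve fixes $C$, so $\Stab_G(\gamma)\le\Stab_G(C)$, where $\Stab_G(C)$ is the order-$3$ group $\langle\rho\rangle$ cyclically rotating the vertices of $C$. Because the reference curve $\gamma_0$ is the $C$-subgraph of the (equivariantly chosen) embedding of $K_4$, $\rho$ fixes $\gamma_0$ and permutes the twist parameters cyclically, so it sends $\gamma_1$ through $\tpl{0,1,0},\tpl{0,0,1}$ and $\gamma_2$ through $\tpl{0,1,1},\tpl{1,0,1}$; in particular it fixes neither, so $\Stab_G(\gamma_j)=1$ and $|G\gamma_j|=12$. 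The two orbits are disjoint, since an element taking $\gamma_1$ to a curve on $C$ must lie in $\langle\rho\rangle$ and $\gamma_2$ is not a $\rho$-translate of $\gamma_1$. Hence $|G\gamma_1\cup G\gamma_2|=24$, with six type-$\triangle$ curves lying on each of the four $3$-cycles. (That distinct twist tuples give distinct curves I would record along the way, e.g.\ from the intersection estimates below or from annular subsurface projections.)

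\textbf{The $1$-system property.} By $G$-equivariance of the geometric intersection pairing it suffices to bound $i(\gamma_j,g\gamma_k)$ for $j,k\in\{1,2\}$ and $g$ running over $G$, and I would split according to whether $g\gamma_k$ lies on the same $3$-cycle $C$ as $\gamma_j$ or on one of the other three (which, since any two triangles in $K_4$ meet in an edge, shares exactly one edge $\alpha$ of $K_4$ with $C$). In each case I would cut $\Sigma$ along the five curves $\alpha_i\ne\alpha$: the component containing $\alpha$ is the four-holed sphere $\Sigma'$ built from the two pairs of pants adjacent to $\alpha$, with $\alpha$ as its separating curve, and both $\gamma_j$ and $g\gamma_k$ restrict there to a single arc crossing $\alpha$ once, obtained by twisting in $\alpha$ from the restriction of the respective reference curve. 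For the different-triangle pairs the two arcs end on \emph{disjoint} holes of $\Sigma'$ — so no half-bigons can form, the cut preserves intersection numbers, the curves' other pieces lie in pairs of pants met by only one of the two curves and contribute nothing, and Lemma~\ref{lem:arcs_dehn_twists} evaluates $i(\gamma_j,g\gamma_k)=\lfloor|m-n|\rfloor$ with $m,n\in\Z\sqcup(\Z+\tfrac12)$ recording the $\alpha$-twisting; since every twist parameter appearing is $0$ or $1$, this is $\le1$. For the same-triangle pairs the arcs end on the \emph{same} holes and the two curves moreover share a piece in the third pair of pants, so I would need to track the twists at the other two edges of $C$ carefully across the cut; Lemma~\ref{lem:arcs_dehn_twists}, which already accounts for half-bigons, still applies and again gives $\lfloor|m-n|\rfloor\le1$. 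Saturation among type $\triangle$ then follows from the same machinery: any type-$\triangle$ curve is $\tpl{\epsilon_1,\epsilon_2,\epsilon_3}$ on some $3$-cycle, which by the $G$-action I may take to be $C$, and if the tuple is not one of the six in the collection then some $\epsilon_i$ lies outside $\{0,1\}$, so the cutting argument against a suitable collection curve on $C$ produces $\lfloor|m-n|\rfloor\ge2$, whence no such curve can be adjoined.

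\textbf{Main obstacle.} The crux is the same-triangle case. There the projected arcs in the four-holed sphere end on the same holes and the two curves have a common piece in a third pair of pants, so one cannot read $i(\gamma_j,g\gamma_k)$ off the $\alpha$-twist alone: the twist parameters at the other two edges of $C$ must be distributed across the cut and shown to contribute exactly so that $\lfloor|m-n|\rfloor$ equals the global geometric intersection number. This is precisely the ``cutting does not modify intersection numbers'' situation flagged just before Lemma~\ref{lem:arcs_dehn_twists}, and getting it right is where most of the work lies.
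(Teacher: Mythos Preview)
Your counting and the different-triangle case match the paper's approach. Two genuine gaps remain.

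The saturation argument is incomplete: your claim that any type-$\triangle$ tuple on $C$ not among the six must have some $\epsilon_i\notin\{0,1\}$ is false, since $\tpl{0,0,0}$ and $\tpl{1,1,1}$ also have all entries in $\{0,1\}$. The paper treats these two separately by capping off the boundary of $\Sigma_{1,3}$ to a torus, where (say) $\tpl{1,1,0}$ becomes $T_\alpha^2$ applied to $\tpl{0,0,0}$, forcing intersection number $2$. Moreover, for the case $\epsilon_j\notin\{0,1\}$ the paper compares not against a curve on $C$ but against one on the \emph{adjacent} $3$-cycle $C'$ sharing the edge $\alpha_j$: that puts the projected arcs on distinct holes of the four-holed sphere, so Lemma~\ref{lem:arcs_dehn_twists} applies cleanly and gives $i\ge 2$.

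For the same-triangle pairs you invoke Lemma~\ref{lem:arcs_dehn_twists}, but as stated that lemma only computes $i(\ast_n,\ast'_m)$ for \emph{distinct} symbols $\ast\neq\ast'$, i.e.\ for arcs ending on distinct pairs of holes. Two type-$\triangle$ curves on the same $C$ project, in the four-holed sphere about any edge of $C$, to arcs on the \emph{same} pair of holes, so the lemma does not apply and your assertion that it ``still applies and again gives $\lfloor|m-n|\rfloor\le 1$'' is unsupported. You rightly flag this as the main obstacle, but the paper does not resolve it by a cutting argument at all: it reduces modulo the $\Stab_G(v)$-action and simultaneous untwisting to the three concrete pairs $\tpl{1,0,0}$ against $\tpl{0,0,0}$, $\tpl{0,1,0}$, $\tpl{0,1,1}$, and verifies $i\le 1$ for each by drawing the curves and invoking the Bigon Criterion directly.
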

    \begin{proof}
        Let $v\in K_4$ be the vertex opposite to $C$. Then the union of the orbits of $\gamma_1$ and $\gamma_2$ under the action of the stabilizer subgroup $\Stab_G(v)\iso\Z/3\Z$ is the collection
        \begin{equation*}
            X_C\coloneqq\l\{\tpl{1,0,0},\tpl{0,1,0},\tpl{0,0,1},\tpl{1,1,0},\tpl{1,0,1},\tpl{0,1,1}\r\},
        \end{equation*}
        which we claim is a $1$-system. Indeed, take any two curves $\gamma,\gamma'\in X_C$. Modulo the action of $\Stab_G(v)$ and simultaneously undoing twists $-$ which do not change $i(\gamma,\gamma')$ $-$ we may assume that $\gamma=\tpl{1,0,0}$ and that $\gamma'$ is one of $\tpl{0,0,0}$, $\tpl{0,1,0}$, and $\tpl{0,1,1}$. Note that the case where $\gamma'=\tpl{0,0,0}$ arises, for instance, when we consider the curves $\tpl{1,1,0}$ and $\tpl{0,1,0}$; simultaneously undoing the twist in $\alpha_2$ gives us $\gamma=\tpl{1,0,0}$ and $\gamma'=\tpl{0,0,0}$, so this case must be considered even if $\tpl{0,0,0}\not\in X_C$.
        \begin{figure}
            \begin{minipage}[t]{0.3\textwidth}
                \begin{center}
                    \begin{tikzpicture}
                        \shell

                        \draw[blue] plot [smooth cycle, tension=0.4] coordinates {(90:1+\width/2) (210:1+\width/2) (330:1+\width/2)};
                        \def\twistColor{blue}\def\twistStart{0.904}\def\twistEnd{0.904}\def\twistPosition{0}\twistFront

                        \draw[darkGreen] plot [smooth cycle, tension=0.4] coordinates {(90:1+\width/2+0.1) (210:1+\width/2+0.1) (330:1+\width/2+0.1)};

                        \def\edgeA{black}\def\edgeB{black}\def\edgeC{black}
                        \edges

                        \fill (155:0.955) circle (0.01in);
                    \end{tikzpicture}
                    \subcaption{$\gamma'=\tpl{0,0,0}$.}
                \end{center}
            \end{minipage}
            \begin{minipage}[t]{0.3\textwidth}
                \begin{center}
                    \begin{tikzpicture}
                        \shell

                        \draw[darkGreen] plot [smooth cycle, tension=0.4] coordinates {(90:1+\width/2+0.1) (210:1+\width/2+0.1) (330:1+\width/2+0.1)};
                        \draw[darkGreen] plot [smooth cycle, tension=0.4] coordinates {(90:1+\width/2-0.1) (210:1+\width/2-0.1) (330:1+\width/2-0.1)};
                        \draw[ultra thick, white] (145:0.955) to [out=240, in=80] (210:1.55);
                        \draw[ultra thick, white] (210:1.55) to [out=-20, in=180] (265:0.95);
                        \begin{scope}[rotate=120]
                            \draw[ultra thick, white] (145:0.84) to [out=240, in=80] (210:1.35);
                            \draw[ultra thick, white] (210:1.35) to [out=-20, in=180] (265:0.84);
                        \end{scope}
                        \begin{scope}[rotate=240]
                            \draw[ultra thick, white] (145:0.84) to [out=240, in=80] (210:1.35);
                            \draw[ultra thick, white] (210:1.35) to [out=-20, in=180] (280:0.84);
                        \end{scope}
                        \def\twistColor{darkGreen}\def\twistStart{0.84}\def\twistEnd{0.9675}\def\twistPosition{120}\twistFront

                        \draw[blue] plot [smooth cycle, tension=0.4] coordinates {(90:1+\width/2) (210:1+\width/2) (330:1+\width/2)};
                        \def\twistColor{blue}\def\twistStart{0.904}\def\twistEnd{0.904}\def\twistPosition{0}\twistFront

                        \draw[darkGreen] (144:0.958) to [out=240, in=60] (161:0.842);

                        \def\edgeA{black}\def\edgeB{black}\def\edgeC{black}
                        \edges
                    \end{tikzpicture}
                    \subcaption{$\gamma'=\tpl{0,1,0}$.}
                \end{center}
            \end{minipage}
            \begin{minipage}[t]{0.3\textwidth}
                \begin{center}
                    \begin{tikzpicture}
                        \shell

                        \draw[darkGreen] plot [smooth cycle, tension=0.4] coordinates {(90:1+\width/2+0.1) (210:1+\width/2+0.1) (330:1+\width/2+0.1)};
                        \draw[darkGreen] plot [smooth cycle, tension=0.4] coordinates {(90:1+\width/2-0.1) (210:1+\width/2-0.1) (330:1+\width/2-0.1)};
                        \draw[ultra thick, white] (145:0.955) to [out=240, in=80] (210:1.55);
                        \draw[ultra thick, white] (210:1.55) to [out=-20, in=180] (265:0.95);
                        \begin{scope}[rotate=120]
                            \draw[ultra thick, white] (145:0.84) to [out=240, in=80] (210:1.35);
                            \draw[ultra thick, white] (210:1.35) to [out=-20, in=180] (265:0.84);
                        \end{scope}
                        \begin{scope}[rotate=240]
                            \draw[ultra thick, white] (145:0.84) to [out=240, in=80] (210:1.35);
                            \draw[ultra thick, white] (210:1.35) to [out=-20, in=180] (280:0.84);
                        \end{scope}
                        \def\twistColor{darkGreen}\def\twistStart{0.84}\def\twistEnd{0.9675}\def\twistPosition{120}\twistFront
                        \def\twistColor{darkGreen}\def\twistStart{0.9675}\def\twistEnd{0.9675}\def\twistPosition{240}\twistFront

                        \draw[blue] plot [smooth cycle, tension=0.4] coordinates {(90:1+\width/2) (210:1+\width/2) (330:1+\width/2)};
                        \def\twistColor{blue}\def\twistStart{0.904}\def\twistEnd{0.904}\def\twistPosition{0}\twistFront

                        \draw[darkGreen] (144:0.958) to [out=240, in=60] (161:0.842);

                        \def\edgeA{black}\def\edgeB{black}\def\edgeC{black}
                        \edges

                        \fill (25:0.894) circle (0.01in);
                    \end{tikzpicture}
                    \subcaption{$\gamma'=\tpl{0,1,1}$.}
                \end{center}
            \end{minipage}
            \caption{The curves {\color{blue}$\gamma=\tpl{1,0,0}$} and {\color{darkGreen}$\gamma'\in\l\{\tpl{0,0,0},\tpl{0,1,0},\tpl{0,1,1}\r\}$} of type $\triangle$ on $C$, which intersect at most once in all three cases, so $X_C$ is a $1$-system.}
            \label{fig:type_triangle_curves}
        \end{figure}
        Figure \ref{fig:type_triangle_curves} shows that the union of the orbits of $\gamma_1$ and $\gamma_2$ under $\Stab_G(v)$ is a $1$-system consisting of $6$ curves of type $\triangle$ on $C$.

        The entire union $X_\triangle$ of the orbits of $\gamma_1$ and $\gamma_2$ under $G$ is then partitioned into four $1$-systems on the $3$-cycles of the tetrahedron, each with $6$ curves. We show that this collection $X_\triangle$ of $24$ curves of type $\triangle$ remains a $1$-system, so take $\gamma,\gamma'\in X_\triangle$. If they are on the same face, then we are done by the above. Otherwise, they share exactly one edge of $K_4$, which joins two pairs of pants $P_1$ and $P_2$ in the pants decomposition $\mc{P}_0$ of $\Sigma$. Glueing $P_1$ and $P_2$ together gives us a subsurface of $\Sigma$ making $\pi(\gamma)$ and $\pi(\gamma')$ a pair of arcs in the situation of Lemma \ref{lem:arcs_dehn_twists}, from which it follows that $i(\gamma,\gamma')=i(\pi(\gamma),\pi(\gamma'))\leq1$.

        Finally, we verify\footnote{We thank the referee for the following elegant proof of this statement.} that this $1$-system is saturated amongst curves of type $\triangle$. Suppose towards a contradiction that there are more than $24$ curves on type $\triangle$, so there is a $3$-cycle in $K_4$ supporting at least $7$ curves of type $\triangle$. But then there would be a $1$-system of $7+3=10$ curves on the subsurface $\Sigma_{1,3}$ corresponding to $C$, which is impossible since $N(1,1,3)=9$ by \cite{MRT14}*{Theorem 1.2}.
    \end{proof}

    \begin{notation*}
        For any $3$-cycle $C'\subset K_4$, let $X_{C'}$ denote the $1$-system consisting of $6$ curves of type $\triangle$ on $C'$.
    \end{notation*}

    \begin{proposition}\label{prp:type_square_3}
        In the notation of Theorem \ref{thm:construction_of_1_system}, the orbit of $\gamma_3=\tpl{1,\slot,0,\slot,0,1}$ under $G$ is a $1$-system consisting of $3$ curves of type $\square$.
    \end{proposition}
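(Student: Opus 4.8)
The plan is to first determine the $G$-orbit of $\gamma_3$, and then verify the $1$-system condition for a single representative pair, the remaining pairs following by the $G$-action.

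\emph{The orbit.} Let $V\leq G=A_4$ be the normal Klein four-subgroup $\l\{e,(12)(34),(13)(24),(14)(23)\r\}$. The quotient $G\onto G/V\iso\Z/3\Z$ realizes the transitive action of $G$ on the three Hamiltonian $4$-cycles of $K_4$ (equivalently, on the three perfect matchings of $K_4$), and $V$ fixes every $4$-cycle and every matching setwise. Let $D$ be the $4$-cycle carrying $\gamma_3$: it omits the matching $\l\{\alpha_2,\alpha_4\r\}$, and $\gamma_3=T_{\alpha_1}^1T_{\alpha_6}^1\l(\gamma_0\r)$ has twist parameter $1$ on $\alpha_1,\alpha_6$ and $0$ on $\alpha_3,\alpha_5$. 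For $v\in V$, the map $v$ is orientation-preserving, fixes $D$, carries the reference curve $\gamma_0$ of $D$ to itself, and preserves the matching $\l\{\alpha_1,\alpha_6\r\}$ setwise; since moreover the $T_{\alpha_j}$ commute (the $\alpha_j$ being pairwise disjoint), I would conclude $v\l(\gamma_3\r)=T_{v\alpha_1}^1T_{v\alpha_6}^1\l(v\gamma_0\r)=T_{\alpha_1}^1T_{\alpha_6}^1\l(\gamma_0\r)=\gamma_3$. Thus $V\leq\Stab_G\l(\gamma_3\r)$ and $\l|G\gamma_3\r|\leq[G:V]=3$; conversely $G\gamma_3$ surjects onto the size-$3$ orbit of $4$-cycles, so $\l|G\gamma_3\r|=3$, with exactly one curve of type $\square$ on each $4$-cycle of $K_4$. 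These three curves are pairwise non-homotopic, since they meet $\alpha_1,\dots,\alpha_6$ in distinct patterns.

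\emph{Reduction to one pair.} The residual $\Z/3\Z$-action is transitive on the three unordered pairs of curves in $G\gamma_3$, so it suffices to bound $i\l(\gamma_3,\gamma'\r)$ for $\gamma'$ the orbit representative on the $4$-cycle $D'$ omitting $\l\{\alpha_1,\alpha_6\r\}$; then $D$ and $D'$ share exactly the two edges $\alpha_3,\alpha_5$, and $\gamma'$ has twist parameters $1$ on $\alpha_3,\alpha_5$ and $0$ on $\alpha_2,\alpha_4$.

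\emph{The intersection number.} Cutting $\Sigma$ along the four pants curves $\alpha_1,\alpha_2,\alpha_4,\alpha_6$ not common to $D$ and $D'$ disconnects it into two four-holed spheres, one obtained by glueing two of the pairs of pants along $\alpha_3$ and the other along $\alpha_5$. Projecting $\gamma_3$ and $\gamma'$ into each piece yields a pair of simple arcs: in the $\alpha_3$-piece they differ from reference arcs as in Lemma \ref{lem:arcs_dehn_twists} by $T_{\alpha_3}^0$ and $T_{\alpha_3}^1$ respectively (the twists of $\gamma_3,\gamma'$ in the boundary curves of that piece being immaterial there), and similarly by $T_{\alpha_5}^0$ and $T_{\alpha_5}^1$ in the $\alpha_5$-piece. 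Lemma \ref{lem:arcs_dehn_twists} then evaluates the two projected intersection numbers, and — arguing as in the proof of Proposition \ref{prp:type_triangle_24} that cutting along the disjoint geodesics $\alpha_1,\alpha_2,\alpha_4,\alpha_6$ neither loses nor creates intersections — one recovers $i\l(\gamma_3,\gamma'\r)$ from them. Reading off from a picture of the embedded $K_4$ the exact isotopy classes of the reference arcs inside the two pieces, one checks that the two contributions do not both survive, so $i\l(\gamma_3,\gamma'\r)\leq1$ and $G\gamma_3$ is a $1$-system of three curves of type $\square$. (Alternatively, and perhaps more directly, one can simply exhibit minimal-position representatives of $\gamma_3$ and $\gamma'$ in a figure.)

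\emph{The main obstacle.} The crux is this last step: the two per-piece contributions must be computed and assembled carefully, since a priori they could each be positive and sum to $2$; pinning down which reference arcs occur and in which twisting sense — precisely the role of a figure in the spirit of those accompanying Proposition \ref{prp:type_triangle_24} — is the only genuinely non-formal ingredient. The supporting claim that the cut along $\alpha_1,\alpha_2,\alpha_4,\alpha_6$ respects intersection numbers is routine, via geodesic representatives exactly as in Proposition \ref{prp:type_triangle_24}.
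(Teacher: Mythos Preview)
Your approach is exactly the paper's: identify the Klein four-group as the stabiliser of $\gamma_3$, reduce by the residual $\Z/3\Z$-symmetry to a single pair, cut along the four non-shared pants curves into two four-holed spheres, and invoke Lemma~\ref{lem:arcs_dehn_twists} on each piece.

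What you flag as the ``main obstacle'' is precisely the content the paper supplies, and your hedge that the two per-piece contributions ``do not both survive'' undersells the outcome. Taking the twists at face value --- $0$ for $\gamma_3$ and $1$ for $\gamma'$ on each of $\alpha_3$ and $\alpha_5$ --- and treating both projected arcs as $\alpha/\beta$-type in Lemma~\ref{lem:arcs_dehn_twists} would yield $\l|0-1\r|=1$ on each piece, summing to $2$. The point is that the two reference arcs in each four-holed sphere, coming from the embedded $K_4$ along \emph{different} $4$-cycles, do not sit as an $\tpl{\alpha_0,\beta_0}$ pair but carry a half-integer offset relative to one another; with the correct identification one gets $i\l(\pi\l(\gamma_3\r),\pi\l(\gamma'\r)\r)=\l\lfloor\tfrac12-\tfrac12\r\rfloor=0$ on \emph{each} piece. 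So the three curves of type $\square$ are in fact pairwise \emph{disjoint}, not merely a $1$-system. This half-integer offset is the only non-formal ingredient, and it is not visible from the twist parameters alone; it comes from tracing how the two tripods of the embedded $K_4$ resolve when pushed off one another across the glueing curve, which is what the paper's figure records.
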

    \begin{proof}
        The group $G=A_4$ acts on the vertices $\l\{v_1,\dots,v_4\r\}$ of $K_4$ by $\sigma v_j\coloneqq v_{\sigma(j)}$, and it can easily be seen from Figure \ref{fig:tetrahedral_square_curve} that $\gamma_3$ is fixed under the Klein $4$ subgroup $K\leq G$. Note that $G\comp K$ is partitioned into $S\coloneqq\l\{(1\ 2\ 3),(1\ 4\ 2),(1\ 3\ 4),(2\ 4\ 3)\r\}$ and their inverses $S^{-1}$, and again from Figure \ref{fig:tetrahedral_square_curve}, it can be easily verified that $S\gamma_3=\l\{\tpl{\slot,0,1,0,1,\slot}\r\}\eqqcolon\l\{\gamma_3'\r\}$ and $S^{-1}\gamma_3=\l\{\tpl{0,1,\slot,1,\slot,0}\r\}\eqqcolon\l\{\gamma_3''\r\}$.
        \begin{figure}
            \begin{minipage}[t]{0.3\textwidth}
                \begin{center}
                    \begin{tikzpicture}
                        \shell

                        \begin{scope}
                            \begin{scope}
                                \draw[darkGreen] (150:0.89) .. controls (210:1+\width-0.1) .. (210:1+\width);
                                \def\twistColor{darkGreen}\def\twistStart{0.904}\def\twistEnd{0.904}\def\twistPosition{0}\twistFront
                            \end{scope}
                            \begin{scope}[rotate=240]
                                \draw[darkGreen] (90:1+\width) .. controls (90:1+\width-0.1) .. (150:0.89);
                            \end{scope}

                            \def\curve{0.08}

                            \draw[darkGreen, dashed, dash pattern={on 2pt off 1pt}] (210:1+\width) .. controls (210:1+\width-0.1) .. (210:\width/2) .. controls (210:\width/2-0.1) and (180:\curve) .. (0,0) .. controls (0:\curve) and (330:\width/2-0.1) .. (330:\width/2) .. controls (330:1+\width-0.1) .. (330:1+\width);

                            \def\twistColor{darkGreen}\def\twistPosition{240}\def\twistExtend{0}\def\twistShift{0}\twistBack

                            \begin{scope}
                                \clip (-0.695,0.4) rectangle (0.78,1.44);
                                \draw[darkGreen] plot [smooth cycle, tension=0.4] coordinates {(90:1+\width/2) (210:1+\width/2) (330:1+\width/2)};
                            \end{scope}
                        \end{scope}

                        \def\edgeA{black}\def\edgeB{white}\def\edgeC{black}\def\edgeE{black}\def\edgeF{black}
                        \edges

                        \vertices
                    \end{tikzpicture}
                    \subcaption{$\gamma_3=\tpl{1,\slot,0,\slot,0,1}$.}
                \end{center}
            \end{minipage}
            \begin{minipage}[t]{0.3\textwidth}
                \begin{center}
                    \begin{tikzpicture}
                        \shell

                        \begin{scope}[rotate=240]
                            \begin{scope}
                                \draw[red] (150:0.89) .. controls (210:1+\width-0.1) .. (210:1+\width);
                                \def\twistColor{red}\def\twistStart{0.904}\def\twistEnd{0.904}\def\twistPosition{0}\twistFront
                            \end{scope}
                            \begin{scope}[rotate=240]
                                \draw[red] (90:1+\width) .. controls (90:1+\width-0.1) .. (150:0.89);
                            \end{scope}

                            \def\curve{0.08}

                            \draw[red, dashed, dash pattern={on 2pt off 1pt}] (210:1+\width) .. controls (210:1+\width-0.1) .. (210:\width/2) .. controls (210:\width/2-0.1) and (180:\curve) .. (0,0) .. controls (0:\curve) and (330:\width/2-0.1) .. (330:\width/2) .. controls (330:1+\width-0.1) .. (330:1+\width);

                            \def\twistColor{red}\def\twistPosition{240}\def\twistExtend{0.02}\def\twistShift{0}\twistBack

                            \begin{scope}
                                \clip (-0.695,0.4) rectangle (0.78,1.44);
                                \draw[red] plot [smooth cycle, tension=0.4] coordinates {(90:1+\width/2) (210:1+\width/2) (330:1+\width/2)};
                            \end{scope}
                        \end{scope}

                        \def\edgeB{black}\def\edgeC{black}\def\edgeD{black}\def\edgeE{black}
                        \edges

                        \vertices
                    \end{tikzpicture}
                    \subcaption{$\gamma_3'=\tpl{\slot,0,1,0,1,\slot}$.}
                \end{center}
            \end{minipage}
            \begin{minipage}[t]{0.3\textwidth}
                \begin{center}
                    \begin{tikzpicture}
                        \shell

                        \begin{scope}[rotate=120]
                            \begin{scope}
                                \draw[blue] (150:0.89) .. controls (210:1+\width-0.1) .. (210:1+\width);
                                \def\twistColor{blue}\def\twistStart{0.904}\def\twistEnd{0.904}\def\twistPosition{0}\twistFront
                            \end{scope}
                            \begin{scope}[rotate=240]
                                \draw[blue] (90:1+\width) .. controls (90:1+\width-0.1) .. (150:0.89);
                            \end{scope}

                            \def\curve{0.08}

                            \draw[blue, dashed, dash pattern={on 2pt off 1pt}] (210:1+\width) .. controls (210:1+\width-0.1) .. (210:\width/2) .. controls (210:\width/2-0.1) and (180:\curve) .. (0,0) .. controls (0:\curve) and (330:\width/2-0.1) .. (330:\width/2) .. controls (330:1+\width-0.1) .. (330:1+\width);

                            \def\twistColor{blue}\def\twistPosition{240}\def\twistExtend{0.02}\def\twistShift{0}\twistBack

                            \begin{scope}
                                \clip (-0.695,0.4) rectangle (0.78,1.44);
                                \draw[blue] plot [smooth cycle, tension=0.4] coordinates {(90:1+\width/2) (210:1+\width/2) (330:1+\width/2)};
                            \end{scope}
                        \end{scope}

                        \def\edgeA{black}\def\edgeB{black}\def\edgeD{black}\def\edgeF{black}
                        \edges

                        \vertices
                    \end{tikzpicture}
                    \subcaption{$\gamma_3''=\tpl{0,1,\slot,1,\slot,0}$.}
                \end{center}
            \end{minipage}
            \caption{\small The orbit of $\gamma_3$ under $G$ is a collection of three pairwise disjoint curves of type $\square$.}
            \label{fig:tetrahedral_square_curve}
        \end{figure}
        Thus $X_\square\coloneqq G\gamma_3=\l\{\gamma_3,\gamma_3',\gamma_3''\r\}$ consists of three curves, so it remains to show that they pairwise intersect at most once.

        Under the action of $G$, it suffices to show that $i(\gamma_3,\gamma_3')\leq1$. Indeed, note that $\gamma_3$ and $\gamma_3'$ only share two edges, and cutting $\Sigma$ along the other $4$ disjoint curves $\alpha_j$ decomposes it into two subsurfaces. This process does not introduce any half-bigons, so the intersection number $i(\gamma_3,\gamma_3')$ does not decrease. The arcs projected from $\gamma_3$ and $\gamma_3'$ onto any one of the subsurfaces $-$ which is two pairs of pants glued along a boundary curve $-$ are in the situation of Lemma \ref{lem:arcs_dehn_twists}, from which it follows that $i(\pi(\gamma_3),\pi(\gamma_3'))=\l\lfloor\frac{1}{2}-\frac{1}{2}\r\rfloor=0$. The same holds for the other subsurface, and since they do not intersect elsewhere, the result follows.\qed
    \end{proof}

    \subsection{Proof of Main Theorem}

    We collect the results from Section \ref{sec:curves_of_types_triangle_and_square} to prove that the system of curves constructed in Theorem \ref{thm:construction_of_1_system} is indeed a saturated $1$-system. To this end, we need the following lemma, which limits the number of curves of type $\square$ that can be included in $X_0$.

    \begin{lemma}\label{lem:exactly_one_type_square}
        In the notation of Theorem \ref{thm:construction_of_1_system}, cut the subsurface $\Sigma_{1,3}$ corresponding to $C$ along $\alpha_2$ to obtain a sphere $\Sigma_{0,5}$ with $5$ holes (see Figure \ref{fig:exactly_one_type_square_arcs}). Then, within $\Sigma_{0,5}$, there is exactly one arc $\eta$ joining $\alpha_5$ and $\alpha_6$ making $\pi(X_C)\cup\l\{\eta\r\}$ a $1$-system of arcs.

        \begin{figure}[h]\hspace{0.1in}
            \begin{tikzpicture}
                \shell

                \begin{scope}
                    \draw[darkGreen] (150:0.89) .. controls (210:1+\width-0.1) .. (210:1+\width);
                    \def\twistColor{darkGreen}\def\twistStart{0.904}\def\twistEnd{0.904}\def\twistPosition{0}\twistFront
                \end{scope}
                \begin{scope}[rotate=240]
                    \draw[darkGreen] (90:1+\width) .. controls (90:1+\width-0.1) .. (150:0.89);
                \end{scope}

                \fill[white] (-0.3,-0.5) rectangle (0.3,0.545);
                \begin{scope}[rotate=120]
                    \fill[white] (-0.3,-0.5) rectangle (0.3,0.545);
                \end{scope}
                \begin{scope}[rotate=240]
                    \fill[white] (-0.3,-0.5) rectangle (0.3,0.545);
                \end{scope}

                \draw[darkGreen, dashed, dash pattern={on 2pt off 1pt}, dash phase=2pt] (210:1+\width) .. controls (210:1+\width-0.1) .. (210:\width/2+0.07);
                \draw[darkGreen, dashed, dash pattern={on 2pt off 1pt}, dash phase=2pt] (330:1+\width) .. controls (330:1+\width-0.1) .. (330:\width/2+0.07);

                \begin{scope}
                    \clip (-0.695,0.4) rectangle (0.78,1.44);
                    \draw[darkGreen] plot [smooth cycle, tension=0.4] coordinates {(90:1+\width/2) (210:1+\width/2) (330:1+\width/2)};
                \end{scope}

                \def\edgeA{black}\def\edgeB{black}\def\edgeC{black}\def\edgeD{black}\def\edgeE{black}\def\edgeF{black}
                \def\drawEdgeD{black}\def\drawEdgeE{black}\def\drawEdgeF{black}
                \edges

                \fill[white] (-0.15,-0.6) rectangle (0.15,-1.18);
                \draw[dashed, dash pattern={on 2pt off 1pt}, dash phase=1pt] (-0.15,-0.617) to[out=270+15, in=90-15] (-0.15,-1.162);
                \draw[dashed, dash pattern={on 2pt off 1pt}, dash phase=1pt] ( 0.15,-0.617) to[out=270+15, in=90-15] ( 0.15,-1.162);
                \draw (-0.15,-0.617) to[out=270-20, in=90+20] (-0.15,-1.161);
                \draw ( 0.15,-0.617) to[out=270-20, in=90+20] ( 0.15,-1.161);

                \coordinate (A2) at (270:1+\width/2-0.05);
                \fill[white] ($(A2)-(0.17,0.1)$) rectangle ($(A2)+(0.17,0.1)$);

                \node at ($(A2)-(0.3,0)$) {\small$\alpha_2^+$};
                \node at ($(A2)+(0.3,0)$) {\small$\alpha_2^-$};

                \begin{scope}[xshift=2in, scale=0.7]
                    \def\bigHor{3.5}
                    \def\bigVer{2.25}
                    \draw (0,0) circle [x radius=\bigHor, y radius=\bigVer];
                    \node at (0,\bigVer) [above]{\small$\alpha_4$};

                    \def\ellipseHor{1}
                    \def\ellipseVer{1.5}
                    \def\rad{0.35}
                    \def\horSep{1.6}
                    \def\verSep{0.6}

                    \draw (-\horSep,0) circle [x radius=\ellipseHor, y radius=\ellipseVer];
                    \draw (\horSep,0) circle [x radius=\ellipseHor, y radius=\ellipseVer];

                    \node at (-\horSep,\ellipseVer+0.2) {\small$\alpha_1$};
                    \node at ( \horSep,\ellipseVer+0.2) {\small$\alpha_3$};

                    \def\shift{0.05}

                    \draw[thick, blue] (-\horSep+\rad-0.05,-\verSep-3*\shift) to[out=0, in=180] (\horSep-\rad+0.05,-\verSep+3*\shift);
                    \draw[thick, cyan]
                        (-\horSep+\rad,-\verSep-\shift)
                            .. controls (\ellipseHor,-\verSep-3*\shift) and (\horSep-1,\ellipseVer-0.1) ..
                        (\horSep,\ellipseVer-0.1)
                            .. controls (\horSep,\ellipseVer-0.1) and (\horSep+\ellipseHor-0.1,\ellipseVer-0.1) ..
                        (\horSep+\ellipseHor-0.1,0)
                            .. controls (\horSep+\ellipseHor-0.1,-\ellipseVer+0.1) and (\horSep,-\ellipseVer+0.1) ..
                        (\horSep,-\ellipseVer+0.1)
                            .. controls (\horSep-0.5,-\ellipseVer+0.1) and (\horSep-1,-\verSep-\shift) ..
                        (\horSep-\rad,-\verSep-\shift);
                    \draw[thick, violet]
                        (\horSep-\rad,-\verSep+\shift)
                            .. controls (-\ellipseHor,-\verSep-\shift) and (-\horSep+1,-\ellipseVer+0.1) ..
                        (-\horSep,-\ellipseVer+0.1)
                            .. controls (-\horSep,-\ellipseVer+0.1) and (-\horSep-\ellipseHor+0.1,-\ellipseVer+0.1) ..
                        (-\horSep-\ellipseHor+0.1,0)
                            .. controls (-\horSep-\ellipseHor+0.1,\ellipseVer-0.1) and (-\horSep,\ellipseVer-0.1) ..
                        (-\horSep,\ellipseVer-0.1)
                            .. controls (-\horSep+1,\ellipseVer-0.1) and (-\horSep+1.25,-\verSep+\shift) ..
                        (-\horSep+\rad,-\verSep+\shift);
                    \draw[thick, orange]
                        (-\horSep+\rad-0.2,-\verSep+3*\shift) --
                        (-\horSep+\rad,-\verSep+3*\shift)
                            .. controls (-\horSep+1,-\verSep+3*\shift) and (-\horSep+1,\ellipseVer-0.2) ..
                        (-\horSep,\ellipseVer-0.2)
                            .. controls (-\horSep-\ellipseHor+0.2,\ellipseVer-0.2) and (-\horSep-\ellipseHor+0.2,0.2) ..
                        (-\horSep-\ellipseHor+0.2,0)
                            .. controls (-\horSep-\ellipseHor+0.2,-0.2) and (-\horSep-\ellipseHor+0.2,-\ellipseVer+0.2) ..
                        (-\horSep,-\ellipseVer+0.2)
                            .. controls (-\horSep+0.5,-\ellipseVer+0.2) and (-0.6,-0.87) ..
                        (-0.5,-0.82) to[out=33, in=233] (0.5,-0.03)
                            .. controls (0.7,0.2) and (\horSep-0.5,\ellipseVer-0.2) ..
                        (\horSep,\ellipseVer-0.2)
                            .. controls (\horSep+\ellipseHor-0.2,\ellipseVer-0.2) and (\horSep+\ellipseHor-0.2,0.2) ..
                        (\horSep+\ellipseHor-0.2,0)
                            .. controls (\horSep+\ellipseHor-0.2,-\ellipseVer+0.2) and (\horSep,-\ellipseVer+0.2) ..
                        (\horSep,-\ellipseVer+0.2)
                            .. controls (\horSep-0.4,-\ellipseVer+0.2) and (\horSep-0.8,-\verSep-3*\shift) ..
                        (\horSep-\rad,-\verSep-3*\shift) --
                        (\horSep-\rad+0.2,-\verSep-3*\shift);
                    \draw[thick, darkGreen, dash pattern={on 2pt off 1pt}]
                        (-\horSep+\rad,\verSep)
                            .. controls (-\horSep+0.5,\verSep) and (-\horSep+0.5,\ellipseVer-0.3) ..
                        (-\horSep,\ellipseVer-0.3)
                            .. controls (-\horSep-\ellipseHor+0.3,\ellipseVer-0.3) and (-\horSep-\ellipseHor+0.3,0.3) ..
                        (-\horSep-\ellipseHor+0.3,0)
                            .. controls (-\horSep-\ellipseHor+0.3,-0.3) and (-\horSep-\ellipseHor+0.3,-\ellipseVer+0.3) ..
                        (-\horSep,-\ellipseVer+0.3)
                            .. controls (-\horSep+0.5,-\ellipseVer+0.3) and (-0.6,-0.73) ..
                        (-0.21,-\verSep-0.04)
                            .. controls (1,-0.2) and (\horSep-0.8,\verSep) ..
                        (\horSep-\rad,\verSep);

                    \filldraw[fill=white] (-\horSep, \verSep) circle (\rad);
                    \filldraw[fill=white] (-\horSep,-\verSep) circle (\rad);
                    \filldraw[fill=white] ( \horSep, \verSep) circle (\rad);
                    \filldraw[fill=white] ( \horSep,-\verSep) circle (\rad);

                    \node at ( \horSep, \verSep) {\small$\alpha_6$};
                    \node at ( \horSep,-\verSep) {\small$\alpha_2^-$};
                    \node at (-\horSep, \verSep) {\small$\alpha_5$};
                    \node at (-\horSep,-\verSep) {\small$\alpha_2^+$};
                \end{scope}

                \begin{scope}[xshift=4.25in]
                    \coordinate (A) at (-2,0);
                    \coordinate (B) at (-1,0);
                    \coordinate (C) at (1,0);
                    \coordinate (D) at (2,0);

                    \def\angle{40}

                    \draw[thick, darkGreen, dash pattern={on 2pt off 1pt}] (B) -- (C);
                    \draw[thick, cyan] (A) to[out=40, in=180-\angle] (D);
                    \draw[thick, violet] (A) to[out=-\angle, in=180+\angle] (D);
                    \draw[thick, orange] (A) to[out=-\angle, in=180+\angle] (0,0) to[out=\angle, in=180-\angle] (D);
                    \draw[thick, blue] (A) to[out=\angle, in=180-\angle] (0,0) to[out=-\angle, in=180+\angle] (D);

                    \fill (A) circle (0.02in) node[left] {\small$\alpha_2^+$};
                    \fill (B) circle (0.02in) node[below] {\small$\alpha_5$};
                    \fill (C) circle (0.02in) node[below] {\small$\alpha_6$};
                    \fill (D) circle (0.02in) node[right] {\small$\alpha_2^-$};
                \end{scope}
            \end{tikzpicture}
            \caption{Representing a pair of pants as \protect
                \begin{tikzpicture}
                    \protect\draw (0,0) circle [x radius=0.18, y radius=0.09];
                    \protect\draw (-0.07,0) circle (0.03);
                    \protect\draw (0.07,0) circle (0.03);
                \end{tikzpicture}, which can be nested, the arc $\color{violet}\tpl{1,\slot,0}$ is pictured as one twist around $\alpha_1$, and similarly for $\color{blue}\tpl{0,\slot,0}$, $\color{cyan}\tpl{0,\slot,1}$, and $\color{orange}\tpl{1,\slot,1}$. Collapsing the boundary curves $\alpha_2^\pm$, $\alpha_5$, and $\alpha_6$ induces a system of arcs between four marked points, as shown on the right.}
            \label{fig:exactly_one_type_square_arcs}
        \end{figure}
    \end{lemma}

    \begin{proof}
        Since $\pi\tpl{\epsilon_1,0,\epsilon_3}=\pi\tpl{\epsilon_1,1,\epsilon_3}$ for each $\epsilon_1,\epsilon_3\in\l\{0,1\r\}$, cutting $\Sigma_{1,3}$ along $\alpha_2$ projects $X_C$ onto four distinct arcs, which we denote by $\tpl{\epsilon_1,\slot,\epsilon_3}$ and draw as in Figure \ref{fig:exactly_one_type_square_arcs}. Collapsing the boundary curves $\alpha_2^{\pm}$, $\alpha_5$, and $\alpha_6$ into $4$ marked points induces a system of arcs between those points, from which it is easily seen that there is only one arc joining the vertices $\alpha_5$ and $\alpha_6$ intersecting the other four arcs at most once. Up to a homotopy of the endpoints along the boundaries, we obtain the desired arc $\eta$.\qed
    \end{proof}

    \begin{remark}
        This lemma holds under the action of $G=A_4$, which we leverage below.
    \end{remark}

    \begin{proof}[Proof of Theorem \ref{thm:construction_of_1_system}]
        The collections of curves $X_\triangle$ and $X_\square$ obtained in Propositions \ref{prp:type_triangle_24} and \ref{prp:type_square_3} are individually $1$-systems consisting of $24$ and $3$ curves of types $\triangle$ and $\square$, respectively. Their union with the $6$ disjoint curves $\alpha_1,\dots,\alpha_6$ is then a collection of $33$ curves, which by Proposition \ref{prp:type_triangle_24} is saturated amongst all curves of type $\triangle$. Since any curve $\gamma\not\in\l\{\alpha_1,\dots,\alpha_6\r\}$ in a $1$-system on $\Sigma$ supported by $\mc{P}_0$ induces a cycle on $K_4$, it suffices to show that $X_\triangle\cup X_\square$ is a $1$-system that is saturated amongst all curves of type $\square$ too.

        As in Lemma \ref{lem:exactly_one_type_square}, we cut the subsurface $\Sigma_{1,3}$ corresponding to $C$ along $\alpha_2$ to obtain a subsurface $\Sigma_{0,5}$, from which we obtain a unique arc $\eta$ between $\alpha_5$ and $\alpha_6$ in $\Sigma_{0,5}$ making $\pi(X_C)\cup\l\{\eta\r\}$ a $1$-system of arcs. Similarly, considering the $3$-cycle $C'$ sharing an edge $\alpha_2$ with $C$ gives us a unique arc $\eta'$ between $\alpha_1$ and $\alpha_3$ in the subsurface $\Sigma_{0,5}'$ of $\Sigma$ $-$ obtained similarly as $\Sigma_{0,5}$ $-$ making $\pi(X_{C'})\cup\l\{\eta'\r\}$ a $1$-system of arcs. The two arcs $\eta$ and $\eta'$ uniquely determine the curve $\gamma\coloneqq\tpl{1,\slot,0,\slot,0,1}$ of type $\square$, with $\eta$ fixing the first two twist parameters and $\eta'$ fixing the last two. Glueing the boundary curves $\alpha_2^+$ and $\alpha_2^-$ by an annulus in the obvious way shows that $X_C\cup X_{C'}\cup\l\{\gamma\r\}$ a $1$-system. Moreover, the other two $3$-cycles of $K_4$ both share a common edge $\alpha_4$, and repeating this process gives us the same curve $\gamma$ making $X_\triangle\cup\l\{\gamma\r\}$ a $1$-system.

        By considering the pair $\tpl{\alpha_2,\alpha_4}$ of opposite edges in $K_4$, we have shown that there is exactly one curve $\gamma$ intersecting $\alpha_1$, $\alpha_3$, $\alpha_5$, and $\alpha_6$ making $X_\triangle\cup\l\{\gamma\r\}$ a $1$-system. Repeating this twice by considering the other two pairs of opposite edges $\tpl{\alpha_1,\alpha_6}$ and $\tpl{\alpha_3,\alpha_5}$ furnishes two more curves $\gamma'$ and $\gamma''$, which recovers the collection $X_\square=\l\{\gamma,\gamma',\gamma''\r\}$ of curves of type $\square$ as obtained in Proposition \ref{prp:type_square_3}.\qed
    \end{proof}

    \section{Future work on the Maximality of $X_0$}\label{sec:maximality}

    It would be interesting to establish that the saturated $1$-system $X_0$ as obtained in Theorem \ref{thm:construction_of_1_system} is indeed maximal. We outline a reason for why we believe that it is. Indeed, recall from Section \ref{sec:construction} the correspondence between the trivalent multigraphs on $4$ vertices and the pants decompositions of $\Sigma\coloneqq\Sigma_3$. It can be shown that there are exactly $5$ isomorphism classes of such multigraphs, which are illustrated below.

    \begin{figure}[h]
        \begin{tikzpicture}[scale=0.8]
            \begin{scope}
                \draw (0,-1.73) -- (2,-1.73) -- (1,0) -- (0,-1.73);
                \draw (1,-1.173) -- (0,-1.73);
                \draw (1,-1.173) -- (2,-1.73);
                \draw (1,-1.173) -- (1,0);
                \fill (0,-1.73) circle (0.05cm);
                \fill (2,-1.73) circle (0.05cm);
                \fill (1,0) circle (0.05cm);
                \fill (1,-1.173) circle (0.05cm);
            \end{scope}
            \begin{scope}[xshift=3cm]
                \draw (0,0) -- (1.73,0);
                \draw (0,-1.73) -- (1.73,-1.73);
                \draw (0,0) to[out=250, in=110] (0,-1.73);
                \draw (0,0) to[out=290, in=70] (0,-1.73);
                \draw (1.73,0) to[out=250, in=110] (1.73,-1.73);
                \draw (1.73,0) to[out=290, in=70] (1.73,-1.73);
                \fill (0,0) circle (0.05cm);
                \fill (0,-1.73) circle (0.05cm);
                \fill (1.73,0) circle (0.05cm);
                \fill (1.73,-1.73) circle (0.05cm);
            \end{scope}
            \begin{scope}[xshift=6cm]
                \draw (0,0) to[out=250, in=110] (0,-1.73);
                \draw (0,0) to[out=290, in=70] (0,-1.73);
                \draw (0,0) -- (0.75,-0.865) -- (0,-1.73);
                \draw (0.75,-0.865) -- (1.25,-0.865);
                \draw (1.625,-0.865) circle (0.375cm);
                \fill (0,0) circle (0.05cm);
                \fill (0,-1.73) circle (0.05cm);
                \fill (0.75,-0.865) circle (0.05cm);
                \fill (1.25,-0.865) circle (0.05cm);
            \end{scope}
            \begin{scope}[xshift=9cm, yshift=-0.865cm]
                \draw (0.2,0) circle (0.2cm);
                \draw (0.4,0) -- (0.8,0);
                \draw (0.8,0) to[out=60, in=120] (1.2,0);
                \fill (0.4,0) circle (0.05cm);
                \fill (0.8,0) circle (0.05cm);
                \begin{scope}[xshift=2cm, rotate=180]
                    \draw (0.2,0) circle (0.2cm);
                    \draw (0.4,0) -- (0.8,0);
                    \draw (0.8,0) to[out=60, in=120] (1.2,0);
                    \fill (0.4,0) circle (0.05cm);
                    \fill (0.8,0) circle (0.05cm);
                \end{scope}
            \end{scope}
            \begin{scope}[xshift=13cm, yshift=-0.865cm]
                \fill (0,0) circle (0.05cm);
                \begin{scope}[rotate=90]
                    \draw (0,0) -- (0.346,0);
                    \draw (0.6055,0) circle (0.2595cm);
                    \fill (0.346,0) circle (0.05cm);
                \end{scope}
                \begin{scope}[rotate=210]
                    \draw (0,0) -- (0.346,0);
                    \draw (0.6055,0) circle (0.2595cm);
                    \fill (0.346,0) circle (0.05cm);
                \end{scope}
                \begin{scope}[rotate=330]
                    \draw (0,0) -- (0.346,0);
                    \draw (0.6055,0) circle (0.2595cm);
                    \fill (0.346,0) circle (0.05cm);
                \end{scope}
            \end{scope}
        \end{tikzpicture}
        \caption{The $5$ (isomorphism classes of) trivalent multigraphs on $4$ vertices. The pants decomposition $\mc{P}_0$ of $\Sigma$ whose dual graph is $K_4$ was used to construct $X_0$.}
        \label{fig:5_trivalent_multigraphs}
    \end{figure}

    Each pants decomposition $\mc{P}$ of $\Sigma$ determines $6$ pairwise disjoint curves on $\Sigma$, and conversely each collection of $6$ pairwise disjoint curves on $\Sigma$ determines a pants decomposition of $\Sigma$ in the same way. Thus, for any $1$-system of curves $X$ on $\Sigma$ supported by $\mc{P}$, one might use methods similar to those presented $-$ say by projecting onto cut subsurfaces of $\mc{P}$ $-$ to probe the structure of $X$. Here is a simple result along those lines.

    \begin{proposition}\label{prp:maximal_is_tetrahedral}
        If $X$ is a maximal $1$-system on $\Sigma$ is supported by some pants decomposition $\mc{P}$ of $\Sigma$, then the dual graph of $\mc{P}$ is at least $2$-connected.
    \end{proposition}
    \begin{proof}
        Cut $\Sigma$ along all of its separating pants curves, which decomposes it into subsurfaces of either $\Sigma_{1,1}$, $\Sigma_{1,2}$, and/or $\Sigma_{2,1}$. Any curve $\gamma$ based in one of the separated components must lie within the same component, so this process partitions $X$ into $1$-systems on those components (together with some possibly double-counted pants curves in $\mc{P}$). By \cite{MRT14}*{Theorem 1.2}, we have $N(1,1,n)=3n$, and since $N(1,2)=12$, we have an upper bound $N(1,2,1)\leq N(1,2)+5=17$. Counting the curves shows that if $X$ corresponds to one of the three $1$-connected graphs, then it has at most
        \begin{equation*}
            17+1+3=21,\ \ \ \ 3+1+6+1+3=14,\ \ \ \ \textrm{and}\ \ \ \ 3+3+3+1+1+1=12
        \end{equation*}
        curves, respectively. But $N(1,3)\geq33$ by Theorem \ref{thm:construction_of_1_system}, so none of them can be maximal.
    \end{proof}

    This narrows down the dual graphs of the pants decompositions of $\Sigma$ supporting maximal $1$-systems to the first two graphs in Figure \ref{fig:5_trivalent_multigraphs}. If one proves that the $2$-connected graph can also be eliminated, and that any maximal $1$-system on $\Sigma$ is supported by a pants decomposition of $\Sigma$, then one might be able to study the combinatorics of the curves of types $\triangle$ and $\square$ to show that the $1$-system $X_0$ we constructed is maximal.

    We have attempted to show the last statement, that any $1$-system of curves on $\Sigma$ supported by the pants decomposition whose dual is $K_4$ contains at most $33$ curves. The main obstacle that we encountered seemed to be in balancing the curves of types $\triangle$ and $\square$ differently in Lemma \ref{lem:exactly_one_type_square}; here we have chosen one extreme, to maximize the curves of type $\triangle$. Given the $A_4$-symmetry of $\Sigma$, this seemed to be the most efficient trade-off, but we were unable to formalize this intuition.

    \begin{bibdiv}
        \begin{biblist}*{labels={alphabetic}}
            \bibselect{setup/bibliography}
        \end{biblist}
    \end{bibdiv}
\end{document}